\newtheorem{theorem}{Theorem}[section]
\newtheorem{lemma}[theorem]{Lemma}
\newtheorem{proposition}[theorem]{Proposition}
\theoremstyle{definition}
\theoremstyle{remark}
\numberwithin{equation}{section}
\newcommand \im {\text{Im}} 
\newcommand \rank {\text{rank}}  
\newcommand \eps \varepsilon
\newcommand \be     {\begin{equation}}
\newcommand \ee     {\end{equation}}
\newcommand \ben    {\begin{eqnarray}}
\newcommand \een    {\end{eqnarray}}
\newcommand \RR     {\mathbb{R}}
\newcommand \ZZ     {\mathbb{Z}}
\newcommand \dt     {\partial_t}
\newcommand \e      {\varepsilon}
\newcommand \dx     {\partial_x}
\newcommand \demi   {{1/2}}
\newcommand \Dcal   {{\mathcal D}}
\newcommand \Ecal   {{\mathcal E}}
\newcommand \Lcal   {{\mathcal L}}
\newcommand \Mcal   {{\mathcal M}}
\newcommand \Ncal   {{\mathcal N}}
\newcommand \Ocal   {{\mathcal O}}
\newcommand \Rcal   {{\mathcal R}}
\begin{document}
\bibliographystyle{plain}
\title[ Late-time/stiff relaxation asymptotic-preserving approximations]
{Late-time/stiff relaxation asymptotic-preserving approximations of hyperbolic equations}


\author[C. Berthon]{Christophe Berthon}
\address{Universit\'e de Nantes, Laboratoire de Math\'ematiques Jean Leray, CNRS UMR 6629,
2 rue de la Houssini\`ere, BP 92208, 44322 Nantes, France.}
\email{Christophe.Berthon@math.univ-nantes.fr}

\author[P.G. L{\tiny e}Floch]{Philippe G. L{\smaller e}Floch}
\address{Laboratoire Jacques-Louis Lions, Centre National de la Recherche Scientifique, 
Universit\'e Pierre et Marie Curie (Paris 6), 4 Place Jussieu,
75252 Paris, France.}
\email{pgLeFloch@gmail.com}

\author[R. Turpault]{Rodolphe Turpault}
\address{Universit\'e de Nantes\\Laboratoire de Math\'ematiques Jean Leray, CNRS UMR 6629, 
2 rue de la Houssini\`ere, BP 92208, 44322 Nantes, France.}
\email{Rodolphe.Turpault@univ-nantes.fr}

\subjclass[2000]{35L65, 65M99}

\keywords{Nonlinear hyperbolic system, stiff source term,
late-time behavior, diffusive regime, finite volume scheme, asymptotic preserving.}

\begin{abstract}
We investigate the late-time asymptotic behavior of solutions to 
nonlinear hyperbolic systems of conservation laws containing stiff relaxation terms. 
First, we introduce a Chapman-Enskog-type asymptotic expansion and derive an
effective system of equations describing the late-time/stiff relaxation singular limit.
The structure of this new system is discussed and the role of 
a mathematical entropy is emphasized. Second, 
we propose a new finite volume discretization which, in late-time asymptotics, 
allows us to recover a discrete version of the same effective asymptotic system. This is 
achieved provided we suitably discretize the relaxation term in a way that 
depends on a matrix-valued free-parameter, chosen so that the desired asymptotic behavior is 
obtained. Our results are illustrated with several models of interest in 
continuum physics,
and numerical experiments demonstrate the relevance of the proposed theory and numerical strategy.
\end{abstract}

\maketitle



\section{Introduction}

\subsection*{Motivations}

We are interested in the numerical approximation of solutions
to nonlinear hyperbolic systems of conservation laws containing stiff relaxation terms. An extensive literature is available 
on such systems since they arise in many physical problems of interest, 
for instance, in the modeling of complex multiphase flows involving phase
transitions or kinetic-type phenomena. (See, for instance, 
\cite{bib_cpe,bib_crispel,bib_afeintou,bib_jx,bib_suliciu}.)  
Stiff relaxation source terms are essential to model phenomena involving distinct physical time-scales. 
The derivation and the analysis (existence, stability) of an effective system of equations 
(also of hyperbolic type as the original system) 
as the relaxation time
goes to zero is required for understanding the behavior of general solutions.
(See, for instance, \cite{bib_liu,bib_suliciu2,bib_yong,bib_yz}.) 

In the present paper, we go beyond these classical works and investigate the {\bf late-time} behavior 
of solutions to systems with stiff relaxation and, specifically,
we consider the class of systems ($\eps>0$) 
\be\label{syst_1}
\eps \, \dt U +\dx F(U) =-\frac{R(U)}{\eps},
\qquad t>0,~ x\in\RR,
\ee
where the main unknown $U:\RR\times\RR^+\to\Omega$ 
takes its values in a convex set $\Omega\subset\RR^N$. The associated
homogeneous first-order system 
---obtained by neglecting the relaxation term $R: \Omega \to \RR^N$ in the right-hand side of
\eqref{syst_1}---
is assumed to be hyperbolic in the sense that the $N\times N$ matrix $A:=D_U F$ admits real 
eigenvalues and  a full basis of eigenvectors.

In comparison with classical works on this subject 
\cite{bib_cll} as well as also \cite{bib_an,bib_bian,bib_chalons,bib_liu}, 
the main novelty in the present work lies in the fact that 
the term $\eps \, \dt U$ is rescaled to be proportional to $\eps$. 
When $\eps$ tends to zero, this will lead us to limit solutions whose behavior is quite distinct from the ones 
of standard relaxation limits. Indeed, as we establish below, the effective problem associated with \eqref{syst_1}  
turns out to be a {\sl diffusion problem,} 
in which the diffusion operator is determined by the (nonlinear) relaxation term $R$, in a way 
explicitly determined in the present work\footnote{
After this paper was completed, P. Marcati pointed out to the authors the relevant references \cite{DM,MR}
in  which several convergence results are established.  }. 
The relaxation map $R:\Omega \to \RR^N$ is assumed to be sufficiently regular and satisfy the 
conditions introduced by Chen, Levermore, and Liu \cite{bib_cll} for the ``hyperbolic to hyperbolic'' relaxation problem.

First of all, we assume the existence of a (constant)  
$n\times N$ matrix $Q$ with (maximal) rank $n<N$ such that
\be
\label{H1_1}
QR(U)=0, \qquad U\in\Omega.
\ee
From this, it follows that, if $U$ is a solution to \eqref{syst_1}, then 
$QU: \RR\times\RR^+\to \RR^N$ satisfies the system of $n$ conservation laws 
\be\label{H2_1}
\eps \, \dt (QU) +\dx (QF(U)) = 0,
\ee
in which the unknown takes values in the (convex) set 
$$
\omega := Q\Omega\subset \RR^n.
$$
We also assume that a map $\Ecal:\omega\to\Omega$ uniquely determines local equilibria, as defined by the relations
\be\label{H3_1}
Q \, \Ecal(u)=u, 
\quad
R(\Ecal(u))=0,
\qquad u\in\omega.
\ee
This suggests to introduce the {\sl equilibrium manifold}
$$
\Mcal:=\big\{ U=\Ecal(u) \big\}.  
$$
The dimension of the null space of the $N\times N$ matrix $B:=DR$ with the equilibrium submanifold
is assumed to be ``maximal'' in the sense that  
\begin{align}
\label{kerDR}
\dim\Big( \ker(B(\Ecal(u)))\Big)&=n,
\\
\label{kerImDR}
\ker\big( B(\Ecal(u)) \big) \cap \im\big( B(\Ecal(u))\big) & =\{0\}.
\end{align}
Finally, since we are interested in the late-time behavior of solutions it is necessary to impose that
$$
QF(\Ecal(u))= c, \qquad  u\in\omega,
$$
for some constant vector $c \in \RR^n$.  Indeed, in view of (\ref{H2_1}),
we can formally write
$$
\eps \, \dt u + \dx QF(\Ecal(u))  \to 0,
$$
so that $QF(\Ecal(u))$ must be a constant, normalized so that 
\be
\label{H4_1}
QF =0 \qquad \text{ on } \Mcal. 
\ee


\subsection*{Main results}

Our objective in this paper is two-fold.  First, we determine an effective system of equations driving 
the late-time asymptotic behavior of general solutions to (\ref{syst_1}) in the singular regime $\eps \to 0$. 
Second, we introduce a novel numerical strategy allowing us to
precisely  recover the expected asymptotic behavior.  

The paper is organized as follows. Section~2 is devoted to the asymptotic analysis: 
we derive an effective system and, under the hypotheses stated in this introduction as well as 
the assumption of the existence of a mathematical entropy compatible with the relaxation
source term (in a sense specified below), 
we study the structure of this system which is found to be of diffusion-type 
and we show that the associated total entropy is non-increasing in time. 

Then, in Section 3 below, we generalize our analysis to a nonlinear version of (\ref{syst_1}), since 
this is required to encompass certain models arising in the aplications when the relaxation 
has strong nonlinearities. The class of systems under consideration in this section reads  
\be\label{systext_1}
\eps \, \dt U+\dx F(U) = -\frac{R(U)}{\eps^m},
\ee
where the parameter $m\ge 1$ introduces an {\sl additional scale} in the problem.  
By imposing that 
$$
R\big(\Ecal(u)+\eps \, U\big) = \eps^m \, R\big(\Ecal(u)+M \, U\big),
\qquad U\in\Omega, \quad u\in\omega,
$$
plus certain conditions on the $N\times N$ matrix $M$ (see Section~3), 
we derive again an effective system which, now, consists of  
{\sl nonlinear diffusion equations.} 

Next, in Section~4, we discuss several specific examples of practical interest and show that 
they fit into our general framework:  (1) the Euler equations of compressible fluids with friction term, 
(2) the so-called $M1$-model of radiative transfer, 
(3) a new model that couples the former two models
and, finally, (4) the shallow-water system with nonlinear friction. Interestingly, the latter 
system does require the more general formalism (\ref{systext_1}).

In Section~5, we turn our attention to the discretization of  (\ref{syst_1})  and (\ref{systext_1}).
Recall that the ``hyperbolic to hyperbolic" relaxation problem was treated numerically
first by Jin and Xin \cite{bib_jx}; our objective is to treat here 
the ``hyperbolic to parabolic'' relaxation
problem. 
We introduce a new Godunov-type finite volume scheme which incorporates a suitable discretization of the source term 
and allows us to recover the expected asymptotic regime. The discrete form of the source term
is derived by modifying a Riemann solver associated with  
the homogeneous system. The proposed numerical strategy is proven to satisfy a domain invariant principle
in the sense that all of the computed states belong to the convex set $\Omega$. 
Finally, in Section~6, we conclude with several numerical experiments with the proposed asymptotic-preserving schemes,
and we demonstrate the interest of our method on several of the physical models.


\section{Late-time/stiff-relaxation framework}

\subsection*{Derivation of the effective system}

Throughout this section, we consider the nonlinear system of balance laws \eqref{syst_1}, and 
we impose the conditions stated in the introduction. 

Our objective is to exhibit the system of effective equations satisfied by local equilibria $u= u(t,x) \in \omega$. 
In the spirit of Chapman-Enskog expansions of the kinetic theory \cite{bib_cer},  
we consider a formal expansion of solutions $U$ to (\ref{syst_1})
in the form:
\be\label{expansion}
U^\eps = \Ecal(u) + \eps \, U_1 +\eps^2 \, U_2 + \ldots, 
\ee
where $U_i$ is referred to as the ${\rm i^{th}}$-order corrector. Such an expansion is natural 
in view of the assumptions (\ref{H3_1})
and (\ref{H4_1}).  We consider the system 
\be\label{eqexpan}
\begin{aligned}
&\eps \, \dt U^{\eps} + \dx F(U^\eps) =
   -\frac{1}{\eps} \, R(U^\eps),\\
&QU^\eps = u,
\end{aligned}
\ee
in which we plug the formal expansion (\ref{expansion}) and 
match together terms of the same order of magnitude in $\eps$.

From $QU^\eps=u$ we deduce that  $Q\Ecal(u)=u$, and 
$QU_i=0$ for
all ${\rm i^{th}}$-order correctors. Next, for sufficiently regular flux $F$, we 
obtain the following expansion:
\be\label{dlF}
\begin{aligned}
F(U^\eps) =& F(\Ecal(u)) + \eps A(\Ecal(u)) \, U_1
+
   \frac{\eps^2}{2}D^2_U F(\Ecal(u)) \, (U_1,U_1)  \\
   & +
   \eps^2 A(\Ecal(u)) \, U_2 + \Ocal(\eps^3).
\end{aligned}
\ee
Analogously, it is useful to write down the formal expansion of the
relaxation term. Taking (\ref{H3_1}) into account, we obtain
\be\label{dlR}
\frac{1}{\eps}R(U^\eps)=B(\Ecal(u)) \, U_1 +
   \frac{\eps}{2}D^2_U R(\Ecal(u)).(U_1,U_1) +
   \eps B(\Ecal(u)) \, U_2 + \Ocal(\eps^2).
\ee
We then plug (\ref{dlF}) and (\ref{dlR}) into (\ref{eqexpan}) and obtain
\be\label{eqdl}
\begin{aligned}
& \eps\dt\Ecal(u) + \dx F(\Ecal(u)) + \eps\dx A(\Ecal(u)) \, U_1
\\ &
= -B(\Ecal(u)) \, U_1 -\frac{\eps}{2} D^2_U R(\Ecal(u)).(U_1,U_1)
-\eps B(\Ecal(u)) \, U_2 +\Ocal(\eps^2).
\end{aligned}
\ee

Let us consider \eqref{eqdl} and expand in $\eps$. The
zeroth-order terms give 
$$
\dx F(\Ecal(u)) = -B(\Ecal(u)) \, U_1.
$$
At this stage, we can solve for $U_1$ by recalling that  $QU_1 =0$. Indeed,
since we have imposed the properties (\ref{kerDR}) and (\ref{kerImDR}) on the null space of $B(\Ecal(u))$ 
then, for all fixed $J\in\RR^N$, the system
\be\label{eqlin}
\begin{aligned}
& B(\Ecal(u)). V =J,\\
& QV =0,
\end{aligned}
\ee
admits a unique solution $V\in\RR^n$ if and only if $QJ=0$.
(See Lemma~\ref{lemanx} at the end of this section.) Here, we
have $J= -\dx F(\Ecal(u))$ which satisfies $Q\dx F(\Ecal(u))=0$ by
(\ref{H4_1}). As a consequence, for all $u\in\omega$, we can uniquely
determine $U_1$ such that 
\be\label{eqU1}
\begin{aligned}
& B(\Ecal(u)). U_1 = -\dx F(\Ecal(u)),\\
& QU_1 =0.
\end{aligned}
\ee

Next, we consider the first-order terms in (\ref{eqdl}), and obtain 
$$
\dt\Ecal(u)+\dx A(\Ecal(u)) \, U_1 = -\frac{1}{2} D^2_U
R(\Ecal(u)).(U_1,U_1)- B(\Ecal(u)) \, U_2.
$$
Multiplying by $Q$ and using the assumption (\ref{H1_1}), we
obtain
$$
 QD^2_U R(\Ecal(u)).(U_1,U_1) \equiv 0, \qquad
 QB(\Ecal(u)) \, U_2 \equiv 0.
$$
Since $Q\Ecal(u)=u$, we arrive at an effective system for the limit $u$, that is, 
\be\label{eqlim}
\dt u = -\dx \left( QA(\Ecal(u)) \, U_1 \right),
\ee
where $U_1$ is the unique solution to \eqref{eqU1}.


\subsection*{The role of a mathematical entropy}

Now, we assume the existence of a sufficiently regular, 
 mathematical entropy 
$\Phi:\Omega\to\RR$ so that the matrix $D^2_U \Phi(U)A(U)$ is
symmetric for all $U$ in $\Omega$ and there exists an entropy-flux map
$\Psi:\Omega\to\RR$ such that
\be\label{defpsi}
D_U \Phi(U) A(U) = D_U \Psi(U),\qquad U\in\Omega.
\ee
Hence, all smooth solutions to (\ref{eqexpan}) satisfy
\be\label{entropexpan}
\eps\dt\Phi(U^\eps) +\dx\Psi(U^\eps) = 
-\frac{1}{\eps} D_U \Phi(U^\eps) R(U^\eps).
\ee
As usual, we impose $\Phi$ to be convex by requiring the $N\times N$
matrix $D^2_U \Phi(U)$ to be positive. In addition, we assume that the
entropy is compatible with the relaxation in the sense that, for  
some map $\nu: \Mcal \to \RR^n$,  
\ben
&& \label{cond1}
  D_U \Phi(U) R(U) \ge 0, \qquad U\in\Omega,`
\\
&& \label{cond2}
D_U \Phi = \nu \, Q \qquad \text{ on } \Mcal. 
\een

We now analyze the nature of the limiting system (\ref{eqlim}).

\begin{theorem}\label{maintheo}
Consider the nonlinear system of balance laws \eqref{syst_1}, under 
the assumptions \eqref{H1_1}--\eqref{H4_1} and \eqref{cond1}--\eqref{cond2}. 
Then, the associated limiting system (\ref{eqlim}) takes the form
\be\label{eqlim2}
\dt u = \dx \left(
S\Lcal^{-1}(u)S^T\left(\dx D_u \Phi(\Ecal(u))\right)^T
\right),
\ee
where
\be\label{defS}
S := Q A(\Ecal(u))
\ee
and
\be\label{defLu}
\Lcal(u) := D^2_U\Phi(\Ecal(u)) B(\Ecal(u)),
\ee
and, for all $b\in\RR^N$ with $Qb =0$, $\Lcal^{-1}(u).b$ denotes the
unique solution to the system
$$
\Lcal(u).V =b, \quad \quad QV =0.
$$
In addition, this system is dissipative with respect to the
entropy $\Phi$ in the sense that 
 the following positivity condition holds for all $u$ in $\omega$: 
$$
\big( \dx D_u \Phi(\Ecal(u))\big) \, S\Lcal^{-1}(u)S^T
\, \big( \dx D_u
\Phi(\Ecal(u)) \big)^T \ge 0.
$$
\end{theorem}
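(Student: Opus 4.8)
The plan is to transform the effective equation \eqref{eqlim}, namely $\dt u = -\dx\big(S\,U_1\big)$ with $S = QA(\Ecal(u))$ and $U_1$ the unique solution of \eqref{eqU1}, into the advertised form \eqref{eqlim2} by inserting the entropy Hessian, and then to read off the dissipation from a linearization of the compatibility inequality \eqref{cond1} at the equilibrium manifold $\Mcal$. The algebraic rewriting carries the structure; the positivity is where the entropy hypotheses do the real work.

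First I would left-multiply the relation $B(\Ecal(u))\,U_1 = -\dx F(\Ecal(u))$ by the symmetric positive Hessian $D^2_U\Phi(\Ecal(u))$. Since $\Lcal(u)=D^2_U\Phi(\Ecal(u))B(\Ecal(u))$ by \eqref{defLu}, this gives $\Lcal(u)\,U_1 = -D^2_U\Phi(\Ecal(u))\,\dx F(\Ecal(u))$. Using $\dx F(\Ecal(u)) = A(\Ecal(u))\,\dx\Ecal(u)$ and the symmetry of $D^2_U\Phi\,A$ (i.e. $D^2_U\Phi\,A = A^T D^2_U\Phi$), the right-hand side equals $A^T D^2_U\Phi(\Ecal(u))\dx\Ecal(u) = A^T\big(\dx D_U\Phi(\Ecal(u))\big)^T$. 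The compatibility relation \eqref{cond2} now enters: on $\Mcal$ one has $D_U\Phi(\Ecal(u))=\nu Q$, so $\big(\dx D_U\Phi(\Ecal(u))\big)^T = Q^T\big(\dx D_u\Phi(\Ecal(u))\big)^T$, where $D_u\Phi(\Ecal(u))$ is the reduced entropy gradient $\nu = D_u[\Phi(\Ecal(u))]$ obtained from $Q\,D_u\Ecal = I_n$. Hence $\Lcal(u)\,U_1 = -S^T\big(\dx D_u\Phi(\Ecal(u))\big)^T$, and since $QU_1=0$ we may write $U_1 = -\Lcal^{-1}(u)S^T(\dx D_u\Phi(\Ecal(u)))^T$; inserting this into $\dt u = -\dx(SU_1)$ produces \eqref{eqlim2}. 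To justify the inverse, note that since $D^2_U\Phi$ is invertible the system $\Lcal(u)V=b,\ QV=0$ is equivalent to $B(\Ecal(u))V=(D^2_U\Phi)^{-1}b,\ QV=0$, which by Lemma~\ref{lemanx} is uniquely solvable exactly when $Q(D^2_U\Phi)^{-1}b=0$; for $b=S^T(\dx D_u\Phi)^T=D^2_U\Phi\,\dx F(\Ecal(u))$ this reduces to $Q\,\dx F(\Ecal(u))=\dx\big(QF(\Ecal(u))\big)=0$, which holds by \eqref{H4_1}.

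For the dissipation I would linearize \eqref{cond1} on $\Mcal$. The scalar map $h(U):=D_U\Phi(U)R(U)$ satisfies $h\ge 0$ on $\Omega$ with $h(\Ecal(u))=0$, so each $\Ecal(u)$ is a global minimizer and $D^2_U h(\Ecal(u))$ is positive semidefinite. When I compute this Hessian, the terms proportional to $R$ vanish at equilibrium and the contribution carrying $\partial B$ drops because $QB\equiv 0$ (differentiate $QR\equiv 0$), leaving exactly $D^2_U h(\Ecal(u))=\Lcal(u)+\Lcal(u)^T$. Thus the symmetric part of $\Lcal(u)$ is positive semidefinite. Writing $g:=(\dx D_u\Phi(\Ecal(u)))^T$ and $W:=\Lcal^{-1}(u)S^Tg$ (so $\Lcal(u)W=S^Tg$, $QW=0$), the quadratic form equals $g^T S\Lcal^{-1}(u)S^Tg=(S^Tg)^TW=(\Lcal(u)W)^TW=W^T\Lcal(u)^TW=\tfrac12 W^T\big(\Lcal(u)+\Lcal(u)^T\big)W\ge 0$, which is the asserted positivity.

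The main obstacle is this last step: correctly evaluating $D^2_U h(\Ecal(u))$ and recognizing that the awkward cross term involving $\partial B$ is annihilated by $QB\equiv 0$, so that precisely the symmetrized matrix $\Lcal(u)+\Lcal(u)^T$ survives and inherits semidefiniteness from the scalar inequality \eqref{cond1}. By contrast, the rewriting into \eqref{eqlim2} is routine once the row/column conventions are fixed and the identification $D_u\Phi(\Ecal(u))=\nu$ through \eqref{cond2} is made explicit.
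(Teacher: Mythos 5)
Your proof is correct and follows essentially the same route as the paper's: the identity $D^2_U\Phi(\Ecal(u))\,\dx F(\Ecal(u)) = S^T\big(\dx D_u\Phi(\Ecal(u))\big)^T$ obtained from the symmetry of $D^2_U\Phi\,A$ together with \eqref{cond2}, followed by dissipativity from the observation that the Hessian of $U\mapsto D_U\Phi(U)R(U)$ at its minimum on $\Mcal$ equals $\Lcal(u)+\Lcal(u)^T$ (your solvability check for $\Lcal(u)V=b$, $QV=0$ via Lemma~\ref{lemanx} is in fact slightly more careful than the paper's). One small correction: the term $D_U\Phi\cdot D^2_U R$ in that Hessian is not annihilated by $QB\equiv 0$ alone --- you also need \eqref{cond2} to write $D_U\Phi(\Ecal(u))=\nu\,Q$ and then $Q\,D^2_U R\equiv 0$ (differentiate $QR\equiv 0$ twice), a point the paper itself passes over silently.
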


\begin{proof} We follow the strategy in \cite{bib_cll} which we adapt to our problem. 
We first establish (\ref{eqlim2}). To simplify the notation, we set
$$
\Dcal(u) = -QA(\Ecal(u)) \, U_1,
$$
to restate (\ref{eqlim}) as follows:
$$
\dt u= \dx \Dcal(u).
$$
Here, the vector $U_1\in\RR^N$ is the unique solution of
(\ref{eqU1}). Since $D^2_U \Phi(U)$ is a positive $N\times N$ matrix
for all $U\in\Omega$, we can rewrite (\ref{eqU1}) as follows:
\be\label{eqU1aux}
\begin{aligned}
& D^2_U \Phi(\Ecal(u))B(\Ecal(u)). U_1 = 
  -D^2_U \Phi(\Ecal(u)) \dx F(\Ecal(u)),\\
& Q U_1 =0.
\end{aligned}
\ee
Invoking (\ref{defLu}), the state vector $U_1$ is defined as the
unique solution of
$$
\begin{aligned}
& \Lcal(u) \, U_1 = -D^2_U \Phi(\Ecal(u)) \dx F(\Ecal(u)),\\
& Q U_1 =0.
\end{aligned}
$$
As a consequence, by definition of $\Lcal^{-1}(u)$ we have
$$
U_1 = -\Lcal^{-1}(u)D^2_U \Phi(\Ecal(u)) \dx F(\Ecal(u)).
$$
From (\ref{defS}), we find 
$$
\Dcal(u) = S \Lcal^{-1}(u)D^2_U \Phi(\Ecal(u)) \dx F(\Ecal(u)).
$$
To obtain (\ref{eqlim2}), we then establish
\be\label{aux0}
D^2_U \Phi(\Ecal(u)) \dx F(\Ecal(u)) = S^T v,
\ee
where we define $v\in\RR^n$ as follows:
\be\label{defv}
v = \left( \dx D_u \Phi(\Ecal(u)) \right)^T.
\ee
To this end we differentiate (\ref{cond2}) (which involves the map $\nu$) and we obtain
$$
\left(D_u \Ecal(u) \right)^T D^2_U \Phi(\Ecal(u))
=
D^2_u\Phi(\Ecal(u))Q.
$$
By transposition, we thus get
\be\label{aux1}
D^2_U \Phi(\Ecal(u)) D_u \Ecal(u) = Q^T D^2_U \Phi(\Ecal(u)).
\ee
Now, we have
$$
D^2_U\Phi(\Ecal(u)) \dx F(\Ecal(u)) =
D^2_U \Phi(\Ecal(u)) A(\Ecal(u)) D_u \Ecal(u) \dx u
$$
Since the matrix $D^2_U\Phi(\Ecal(u)) A(\Ecal(u))$ is symmetric, we can
write
$$
D^2_U\Phi(\Ecal(u)) \dx F(\Ecal(u)) =
\left(A(\Ecal(u))\right)^T D^2_U \Phi(\Ecal(u)) D_u \Ecal(u) \dx u.
$$
We use (\ref{aux1}) and obtain
\be
\aligned
D^2_U\Phi(\Ecal(u)) \dx F(\Ecal(u))
  &= \left(A(\Ecal(u))\right)^T Q^T D^2_u \Phi(\Ecal(u)) \dx u
\\
&= S^T v, \label{eqSTv}
\endaligned
\ee 
and the identity (\ref{eqlim2}) follows.

The proof will be completed as soon as we establish
$$
v^TS \Lcal^{-1}(u) S^T v \ge 0,
$$
which is equivalent to $v^T \Dcal(u) \ge 0$ where $\Dcal(u) = -S U_1$. We thus
have
$$
v^T\Dcal(u)=-v^T S U_1.
$$
But from (\ref{eqU1aux}) and (\ref{aux0}), we deduce 
$$
v^T\Dcal(u) = \left(
D^2_U \Phi(\Ecal(u)) B(\Ecal(u)) U_1
\right)^T U_1.
$$
As a consequence, the expected inequality $v^T \Dcal(u) \ge 0$ will hold as soon as we prove that the matrix $D^2_U \Phi(\Ecal(u)) D_U
R(\Ecal(u))$ is non-negative.

Recalling the entropy assumption (\ref{cond1}) and the equilibrium
property $R(\Ecal(u))=0$, we have
\ben
&& \nonumber
  D_U \Phi(U) R(U) \ge 0,\qquad  U\in\Omega,\\
&& \nonumber
  \left(D_U \Phi(U) R(U)\right)|_{ U=\Ecal(u)}=0,\qquad  u\in\omega.
\een
As a consequence, the matrix $D^2_U\left(D_U \Phi(U)
R(U)\right)|_{ U=\Ecal(u)}$ is non-negative. Next, a calculation using the chain rule 
and the fact that $R$ vanishes on the equilibrium submanifold, that is, 
$$
R(\Ecal(u)) = 0, 
$$
leads us easily to 
$$
D^2_U\left(D_U \Phi(U) R(U)\right)|_{ U=\Ecal(u)} =
  D^2_U\Phi(\Ecal(u)) B(\Ecal(u)) +
\left( D^2_U\Phi(\Ecal(u)) B(\Ecal(u)) \right)^T,
$$
in which no third-order derivative terms arise since $R$ precisely vanishes on the equilibrium manifold. 
From the above identity, it then follows that 
\be\label{positivity}
\left( (D^2_U\Phi(\Ecal(u)) B(\Ecal(u))) \, U \right)^T U \ge 0,
\qquad U\in\Omega.
\ee
We thus obtain the expected inequality $v^T \Dcal(u) \ge 0$ and the
proof is completed.
\end{proof}


\subsection*{Monotonicity of the entropy}

We then study the asymptotic behavior of the entropy
inequality. In order to exhibit the entropy law satisfied by the
equilibrium solution $\Ecal(u)$, we need the following technical result.

\begin{lemma}\label{lpsicons}
Under the assumptions of Theorem~\ref{maintheo}, 
the entropy-flux map restricted to the equilibrium
$\Psi(\Ecal(u))$ remains constant for all $u$ in $\omega$.
\end{lemma}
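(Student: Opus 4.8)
The plan is to show directly that the $u$-gradient of the scalar function $u \mapsto \Psi(\Ecal(u))$ vanishes identically on $\omega$; since $\omega$ is convex, hence connected, this forces $\Psi(\Ecal(u))$ to be constant. First I would differentiate by the chain rule,
$$
D_u\big(\Psi(\Ecal(u))\big) = D_U\Psi(\Ecal(u)) \, D_u\Ecal(u),
$$
and then invoke the compatibility relation \eqref{defpsi}, namely $D_U\Psi = D_U\Phi \, A$, to rewrite the right-hand side as $D_U\Phi(\Ecal(u)) \, A(\Ecal(u)) \, D_u\Ecal(u)$.

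The key observation is that the two remaining hypotheses combine to annihilate this expression. On the one hand, the entropy-compatibility condition \eqref{cond2} asserts that $D_U\Phi = \nu \, Q$ on the equilibrium manifold $\Mcal$, so that
$$
D_u\big(\Psi(\Ecal(u))\big) = \nu(u) \, Q \, A(\Ecal(u)) \, D_u\Ecal(u).
$$
On the other hand, the late-time normalization \eqref{H4_1} states that $QF(\Ecal(u))$ is constant in $u$; differentiating this identity in $u$ and using $A = D_U F$ gives
$$
Q \, A(\Ecal(u)) \, D_u\Ecal(u) = D_u\big(Q F(\Ecal(u))\big) = 0.
$$
Substituting this into the previous display shows $D_u\big(\Psi(\Ecal(u))\big) = 0$, which completes the argument.

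I do not expect any genuine obstacle here: the content of the lemma is essentially that the two independent structural hypotheses \eqref{cond2} and \eqref{H4_1} line up exactly so that the gradient factors through $S \, D_u\Ecal(u) = Q A(\Ecal(u)) D_u\Ecal(u)$, which vanishes by construction. The only points requiring a moment's care are the bookkeeping of dimensions---$\nu(u)$ is a row vector in $\RR^n$, $Q$ is $n \times N$, and $D_u\Ecal(u)$ is $N \times n$, so that the products are well-defined and yield a $1 \times n$ gradient---together with the remark that connectedness of $\omega$ upgrades the vanishing of the derivative to the constancy of $\Psi(\Ecal(u))$ itself.
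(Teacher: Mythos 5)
Your proof is correct and follows essentially the same route as the paper: differentiate $\Psi(\Ecal(u))$ by the chain rule, use \eqref{defpsi} to replace $D_U\Psi$ by $D_U\Phi\,A$, invoke \eqref{cond2} to factor out $Q$, and observe that $QA(\Ecal(u))D_u\Ecal(u) = D_u\bigl(QF(\Ecal(u))\bigr) = 0$ by \eqref{H4_1}. The only cosmetic difference is that the paper identifies $\nu$ with $D_u\Phi(\Ecal(u))$ while you keep it abstract, which changes nothing in the argument.
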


\begin{proof}We consider the map $u\mapsto\Psi(\Ecal(u))$ and, after differentiation, obtain 
$$
D_u \Psi(\Ecal(u)) = D_U \Psi(\Ecal(u)) D_u \Ecal(u).
$$
By definition of $\Psi$ given by (\ref{defpsi}), we have 
$$
D_u \Psi(\Ecal(u)) = D_U \Phi(\Ecal(u)) A(\Ecal(u)) D_u \Ecal(u).
$$
Then, the assumption (\ref{cond2}) made on $\Phi$ yields the
following relation:
\be
\aligned
D_u \Psi(\Ecal(u)) &= D_u \Phi(\Ecal(u)) Q A(\Ecal(u)) D_u
\Ecal(u), 
\\
&=  D_u \Phi(\Ecal(u)) D_u QF(\Ecal(u)). 
\endaligned
\ee 
Since we have $QF(\Ecal(u))=0$ over $\omega$, then $D_u QF(\Ecal(u))
=0$. As a consequence, $D_u\Psi(\Ecal(u))=0$ for all $u$ in $\omega$
and the proof is completed.
\end{proof}

Equipped with this result, we can exhibit the asymptotic equation
satisfied by the equilibrium entropy $\Phi(\Ecal(u))$. Arguing the
formal asymptotic expansion (\ref{expansion}) satisfied by $U$:
$$
U^\eps = \Ecal(u) + \eps U_1 + \ldots,
$$
where $U_1$ is defined as the unique solution to (\ref{eqU1}), we
consider the formal expansion of each term in (\ref{entropexpan}). First,
since the entropy and entropy-flux are regular maps, we have
\be\label{exp1}
\Phi(U^\eps)=\Phi(\Ecal(u)) +\eps D_U\Phi(\Ecal(u)) \, U_1 + \Ocal(\eps^2),
\ee
and
\be
\Psi(U^\eps)=\Psi(\Ecal(u)) +\eps D_U\Psi(\Ecal(u)) \, U_1 + \Ocal(\eps^2).
\ee
But, by applying Lemma \ref{lpsicons}, we deduce the following
relation:
\be\label{exp2}
\dx \Psi(U^\eps) = \eps\dx D_U\Psi(\Ecal(u)) \, U_1 + \Ocal(\eps^2).
\ee
Similarly, concerning the entropy relaxation source term, we easily
have:
\be\label{exp3}
D_U \Phi(U^\eps)R(U^\eps) = \eps^2D^2_U \Phi(\Ecal(u)) D_U
R(\Ecal(u)) U_1 + \Ocal(\eps^3).
\ee
Now, we plug the expressions (\ref{exp1}), (\ref{exp2}) and
(\ref{exp3}) into (\ref{entropexpan}), and consider the first-order
terms only: 
\be\label{entropequi}
\dt\Phi(\Ecal(u)) = -\dx \left( D_U \Psi(\Ecal(u)) \, U_1\right)
- U_1^T \left( D^2_U \Phi(\Ecal(u)) B(\Ecal(u)) \right)  U_1,
\ee
where, once again, $U_1$ is the unique solution to (\ref{eqU1}). From
this entropy evolution law, we can state an entropy decreasing
principle. Indeed, we have established that the matrix $D^2_U \Phi(\Ecal(u)) D_U
R(\Ecal(u))$ is positive (cf.~(\ref{positivity})
and, as a consequence, 
$$
U^T \left( D^2_U \Phi(\Ecal(u)) B(\Ecal(u)) \right) U \ge 0,
\qquad  U\in\Omega.
$$
We have thus proven the following statement. 

\begin{proposition}
\label{lentrop}
Under the assumptions of Theorem~\ref{maintheo}, the entropy is non-increasing in the following sense:
$$
\dt\Phi(\Ecal(u)) \le -\dx \left( D_U \Psi(\Ecal(u)) \, U_1\right).
$$
\end{proposition}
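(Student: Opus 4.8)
The plan is to obtain the inequality directly from the entropy evolution identity established just above, by discarding the one term that the positivity analysis of Theorem~\ref{maintheo} guarantees to have a definite sign. Concretely, I would start from the exact first-order balance already derived from (\ref{entropexpan}) via the expansions (\ref{exp1})--(\ref{exp3}) and Lemma~\ref{lpsicons}, namely
$$
\dt\Phi(\Ecal(u)) = -\dx \left( D_U \Psi(\Ecal(u)) \, U_1\right) - U_1^T \left( D^2_U \Phi(\Ecal(u)) B(\Ecal(u)) \right)  U_1,
$$
where $U_1$ is the unique solution to (\ref{eqU1}). The whole proposition then reduces to checking that the quadratic remainder $U_1^T ( D^2_U \Phi(\Ecal(u)) B(\Ecal(u)) ) U_1$ is non-negative.

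To control that remainder I would reuse the positivity produced inside the proof of Theorem~\ref{maintheo}. Recall the argument there: the compatibility hypothesis (\ref{cond1}) states $D_U\Phi(U)R(U)\ge 0$ on $\Omega$, while $R(\Ecal(u))=0$ forces equality at $U=\Ecal(u)$; hence $\Ecal(u)$ minimizes $U\mapsto D_U\Phi(U)R(U)$ and the corresponding Hessian is positive semidefinite. Since $R$ vanishes on the equilibrium manifold, that Hessian collapses to the symmetric combination $D^2_U\Phi(\Ecal(u)) B(\Ecal(u)) + (D^2_U\Phi(\Ecal(u)) B(\Ecal(u)))^T$, and using that $V^T M^T V = V^T M V$ for any scalar one obtains $V^T ( D^2_U\Phi(\Ecal(u)) B(\Ecal(u)) ) V \ge 0$, which is precisely the inequality (\ref{positivity}).

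Finally I would apply this with $V=U_1$, concluding that $U_1^T ( D^2_U \Phi(\Ecal(u)) B(\Ecal(u)) ) U_1 \ge 0$, and substitute back into the evolution identity to get
$$
\dt\Phi(\Ecal(u)) \le -\dx \left( D_U \Psi(\Ecal(u)) \, U_1\right),
$$
as claimed. I do not anticipate any serious obstacle, because every analytic ingredient --- the derivation of the balance law and the matrix positivity --- is already in place; the only point to watch is that (\ref{positivity}) is phrased for arguments in $\Omega$, whereas the corrector $U_1$ ranges over $\RR^N$. This is immaterial: positive semidefiniteness of the symmetric part of $D^2_U\Phi(\Ecal(u))B(\Ecal(u))$ is a property of the matrix itself and therefore holds for every vector, so evaluating the form on $U_1$ is legitimate.
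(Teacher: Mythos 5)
Your proposal is correct and follows essentially the same route as the paper: it takes the first-order entropy balance (\ref{entropequi}) derived from (\ref{entropexpan}) via Lemma~\ref{lpsicons}, and discards the quadratic remainder using the positivity (\ref{positivity}) established inside the proof of Theorem~\ref{maintheo}. Your extra remark that (\ref{positivity}), though stated for states in $\Omega$, is really a property of the matrix $D^2_U\Phi(\Ecal(u))B(\Ecal(u))$ and hence applies to the corrector $U_1\in\RR^N$, is a legitimate clarification of a point the paper passes over silently.
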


To conclude this presentation of the asymptotic system of diffusion equations
satisfied by stiff relaxation term for late times, let us 
emphasize the role played by the entropy. As recognized by 
Chen, Levermore and Liu \cite{bib_cll}, the existence of a convex 
mathematical entropy
provides an important structure to investigate the asymptotic regime
satisfied by the model. However, the main discrepancy with \cite{bib_cll}
lies in the nature of the singular limit system. Indeed, in \cite{bib_cll},
the singular limit system turns out to be an hyperbolic system
supplemented by an $\eps$ first-order diffusive term. Here, the
obtained asymptotic system defines a system of diffusion equations. Even if the
limiting solution is smooth (due to the diffusive
nature of the limiting system), the mathematical entropy is essential in order 
to establish the stability of the asymptotic regime.


\subsection*{A technical lemma}

We end this section with  a technical lemma that was useful in the above derivation of the asymptotic system. 

\begin{lemma}
\label{lemanx}
Let $A$ be a real $N\times N$ matrix such that $\dim(\ker(A))=n$ and $\ker(A)\cap \im(A)=\{0\}$. 
Let also $Q$ be a real $n\times N$ matrix such that $\rank(Q)=n$ and $QA=0$ 
Then, for any $b\in\RR^N$, the linear system 
\begin{equation}
\label{sysrect}
\aligned
Ax & = b,
\\
Qx & = 0,
\endaligned
\end{equation}
admits a unique solution if and only if $Qb=0$.
\end{lemma}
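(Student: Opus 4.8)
The plan is to reduce the entire statement to the single structural identity $\im(A)=\ker(Q)$, after which both the solvability criterion and the uniqueness follow from elementary linear algebra. I would begin by noting that the hypothesis $QA=0$ means $Q(Ax)=0$ for every $x\in\RR^N$, hence $\im(A)\subseteq\ker(Q)$. To upgrade this inclusion to an equality, I would compare dimensions: by the rank--nullity theorem $\dim\im(A)=N-\dim\ker(A)=N-n$, while $\dim\ker(Q)=N-\rank(Q)=N-n$. An inclusion between two subspaces of the same finite dimension is necessarily an equality, so $\im(A)=\ker(Q)$.

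With this identity in hand, necessity is immediate: if $x$ solves \eqref{sysrect}, then $b=Ax\in\im(A)=\ker(Q)$, so that $Qb=0$. Thus the condition $Qb=0$ is forced by the mere existence of a solution.

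For the converse (existence and uniqueness under $Qb=0$), the key observation I would isolate is that the restriction $Q|_{\ker(A)}\colon\ker(A)\to\RR^n$ is a bijection. Injectivity is where the second hypothesis on $A$ enters: if $k\in\ker(A)$ satisfies $Qk=0$, then $k\in\ker(A)\cap\ker(Q)=\ker(A)\cap\im(A)=\{0\}$, whence $k=0$. Since $\dim\ker(A)=n=\dim\RR^n$, injectivity of a linear map between spaces of equal dimension forces bijectivity.

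Assembling these pieces finishes the argument. For existence when $Qb=0$, I would use $b\in\ker(Q)=\im(A)$ to pick some $x_0$ with $Ax_0=b$, then invoke surjectivity of $Q|_{\ker(A)}$ to choose $k\in\ker(A)$ with $Qk=-Qx_0$; the vector $x:=x_0+k$ then satisfies $Ax=Ax_0=b$ and $Qx=Qx_0+Qk=0$. For uniqueness, the difference $x_1-x_2$ of two solutions lies in $\ker(A)\cap\ker(Q)=\{0\}$, so $x_1=x_2$. I do not expect a genuine obstacle here; the only point requiring care is recognizing that the two hypotheses on $A$ play complementary and non-interchangeable roles—the dimension count built on $\dim\ker(A)=n$ yields $\im(A)=\ker(Q)$ and hence solvability, whereas the transversality condition $\ker(A)\cap\im(A)=\{0\}$ is exactly what makes $Q|_{\ker(A)}$ injective and thereby delivers both uniqueness and the freedom to correct $x_0$ into the kernel of $Q$.
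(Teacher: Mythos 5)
Your proof is correct, and every step checks out: the inclusion-plus-dimension-count gives $\im(A)=\ker(Q)$, the transversality hypothesis gives injectivity (hence bijectivity) of $Q|_{\ker(A)}$, and existence and uniqueness follow. Your route differs from the paper's in two ways. First, for the key identity $\im(A)=\ker(Q)$, the paper argues through the transpose: since $QA=0$ and $\dim\ker(A^T)=n=\rank(Q)$, the columns of $Q^T$ form a basis of the left null space $\ker(A^T)$; it then invokes the orthogonal decomposition $\RR^N=\ker(A^T)\oplus\im(A)$ and the positive-definiteness of $QQ^T$ to conclude that $Qz=0$ forces $z\in\im(A)$. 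Your dimension count reaches the same identity in two lines with no appeal to orthogonality or to $QQ^T$ --- a genuine simplification. Second, for existence, the paper decomposes an arbitrary preimage $x$ of $b$ along $\ker(A)\oplus\im(A)$ and keeps the $\im(A)$-component, which lies in $\ker(Q)$ automatically; you instead correct the preimage by an element of $\ker(A)$, using surjectivity of $Q|_{\ker(A)}$. These are two packagings of the same use of the hypothesis $\ker(A)\cap\im(A)=\{0\}$: the paper spends it on the direct sum $\ker(A)\oplus\im(A)=\RR^N$, you spend it on injectivity of $Q|_{\ker(A)}$; the uniqueness arguments then coincide. What the paper's detour buys is the explicit structural fact that the rows of $Q$ span the left null space of $A$, which is how $Q$ naturally arises in the relaxation framework; what your version buys is brevity and a transparent separation of the roles of the two hypotheses on $A$ --- the kernel dimension yields solvability of $Ax=b$, the transversality yields the ability to enforce $Qx=0$ and uniqueness.
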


\begin{proof}
If the system (\ref{sysrect}) has a solution then left-multiplying $Ax=b$ by $Q$ leads to $QAx=Qb$. Since $QA=0$ then $Qb=0$.\\
Now we suppose that $Qb=0$. Since $QA=0$ then the columns of $Q^T$ are elements of the left null-space of $A$. Furthermore, $\dim(\ker(A^T)) = n = \rank(Q)$ therefore the columns of $Q^T$ are a basis of the left null-space of $A$. This implies that if $z_0\in \ker(A^T)$ then there exists $y_0\in \RR^n$ such that $z_0=Q^T y_0$.

Let us consider $z\in \RR^N$ such that $Qz=0$. Using the fundamental theorem of linear algebra we 
have $z=z_0+z_1$ where $z_0\in \ker(A^T)$ and $z_1\in \im(A)$.\\
We then characterize $z_0$ and $z_1$ using their respective definitions: there exists $y_1\in\RR^N$ such that $z_1=Ay_1$,
 and
there exists $y_0\in\RR^n$ such that $z_0=Q^Ty_0$. With these characterizations we have:
\begin{align*}
0&=Qz=Qz_0+Qz_1
\\
&=QQ^T y_0+QAy_1 =QQ^Ty_0+0, 
\end{align*}
since $QA=0$. Now, $\rank(Q)=n$ implies that $QQ^T$ is a symmetric positive-definite matrix and therefore that $y_0=0$. Therefore $z\in \im(A)$ if and only if $Qz=0$.

By considering first the existence issue, 
the above property on $b$ allows us to say that $b\in \im(A)$ and hence there exists 
$x\in \RR^N$ such that $Ax=b$. If $x$ is such a solution and since $\ker(A)\oplus \im(A)=\RR^N$, we have
 $x=x_0+x_1$ where $x_0\in \ker(A)$ and $x_1\in \im(A)$. With these notation $x_1$ is a solution to (\ref{sysrect}).\\

Considering next the uniqueness issue and
suppose that $x_1$ and $x_2$ are two solutions to (\ref{sysrect}). Then $(x_1-x_2)$ is the solution to
\begin{equation*}
\aligned
A(x_1-x_2) & = 0,
\\
Q(x_1-x_2) & = 0,
\endaligned 
\end{equation*}
and therefore $(x_1-x_2)\in \ker(A)\cap \im(A)=\{0\}$. Hence, the solution to (\ref{sysrect}) is unique.
\end{proof}


\section{Nonlinear diffusive regime}

\subsection*{Derivation of a nonlinear asymptotic system}

Some physical models involve several relaxation time-scales. 
More precisely, we suppose now that the ratio of the relaxation time and the late time under consideration 
is no longer constant. The extended model thus reads 
\be\label{systnl}
\eps\dt U+\dx F(U) =
-\frac{1}{\eps^m} R(U),
\qquad t>0, \quad x\in\RR,
\ee
with $U\in\Omega\subset\RR^N$. Here, $m\ge 1$ denotes an
integer. The case $m=1$ has been discussed in the previous section, and we now assume $m>1$.

Several of the assumptions made for $m=1$ are kept here. Precisey, we assume
the existence of an $n\times N$ matrix $Q$ with rank $n<N$ satisfying 
(\ref{H1_1}), as well as the existence of a map 
$\Ecal:\omega\to\Omega$ satisfying (\ref{H3_1}). We also impose that 
the flux $F$ satisfies (\ref{H4_1}).
Concerning the nonlinear relaxation map $R$, an additional
assumption must be imposed: there exists a $N\times N$ matrix,
denoted $M(\eps)$, such that
\be\label{Hadd}
R(\Ecal(u)+\eps U) =\eps^m R\big( \Ecal(u) + M(\eps)U \big),
\qquad  U\in\Omega, \quad u\in\omega.
\ee
The matrix $M(\eps)$ is assumed to be sufficiently smooth in $\eps \in [0,1]$. 

The assumption initially made on the
kernel of the matrix $B(\Ecal(u))$ is irrelevant if
$m>1$. Indeed, this kernel assumption was imposed to ensure the existence
and uniqueness of the solution to (\ref{eqlin}). We are going to see
that this linear system is no longer relevant and must be replaced 
by a nonlinear problem. In
this sense, the proposed extension is called {\it nonlinear} since the
diffusive asymptotic regime will involve a nonlinear
differential operator.

First, to derive the effective system of equations satisfied by the
local equilibrium $u\in\omega$, we introduce again a Chapman-Enskog-type expansion: 
$$
U^\eps = \Ecal(u) +\eps U_1 +\eps^2 U_2+...
$$
We plug this expansion into (\ref{systnl}) and match terms of the same order in $\eps$.

Enforcing $QU^\eps=u$, the condition (\ref{H3_1}) on the local
equilibrium implies $QU_i=0$ for each corrector
term. Note that the expansion (\ref{dlF}) for the flux remains valid, 
and we only have to evaluate the expansion of the
relaxation term by recalling (\ref{Hadd}). Indeed, this
assumption gives
$$
R(U^\eps)=\eps^m \, R\big( \Ecal(u) +M(\eps) U_1 +\eps M(\eps) U_2+...
\big),
$$
and so 
\be\label{dlRm}
\begin{aligned}
\frac{1}{\eps^m}R(U^\eps) =&
\, R(\Ecal(u)+M(0)U_1) + \eps B(M(0)U_1).(M(0)U_2)
 \\
 & \, 
+ \eps \, D_\eps (M(\eps)U_1)|_{\eps=0}.
  R(\Ecal(u)+M(0)U_1) + \Ocal(\eps^2).
\end{aligned}
\ee
Setting (\ref{dlF}) and (\ref{dlRm}) into (\ref{systnl}), we obtain
\be\label{dleqnl}
\begin{aligned}
& \eps\dt\Ecal(u) + \dx F(\Ecal(u)) + \eps A(\Ecal(u)) \, U_1
\\
&=
 - R(\Ecal(u)+M(0)U_1) - 
\eps B(M(0)U_1).(M(0)U_2)
\\
&\quad 
- \eps D_\eps (M(\eps)U_1)|_{\eps=0}.
  R(\Ecal(u)+M(0)U_1) + \Ocal(\eps^2).
\end{aligned}
\ee

Considering the zeroth-order terms, we get
$$
\dx F(\Ecal(u)) = -R \big( \Ecal(u)+M(0)U_1 \big),
$$
which turns out to be a nonlinear system of equations with unknown $U_1$, supplemented by 
the condition $QU_1=0$.

At this level, we see that the assumption on 
the kernel of $B(\Ecal(u))$ is no longer relevant (or sufficient)
 in order to solve (\ref{eqlin}),
since we now have to consider 
\be\label{eqU1nl}
\begin{aligned}
&R(\Ecal(u)+M(0)U_1) = -\dx F(\Ecal(u)),\\
&QU_1=0.
\end{aligned}
\ee
In view of the strong nonlinearities involved in this
equation, we tacitly assume the existence and uniqueness of the solution, denoted $\bar{U}_1$, of (\ref{eqU1nl}).
In the applications, this property will be checked directly. 

Then, we match first-order terms issuing from (\ref{dleqnl}) to get
$$
\begin{aligned}
& \dt\Ecal(u) + \dx A(\Ecal(u)).\bar{U}_1 
\\
& = -B(M(0)\bar{U}_1).(M(0)U_2)
  -D_\eps (M(\eps)\bar{U}_1)|_{\eps=0}.
  R(\Ecal(u)+M(0)\bar{U}_1).
\end{aligned}
$$
Since $Q\Ecal(u)=u$ for all $u\in\omega$, and since $QR(U)=0$ and
$QB(U)=0$ for all $U\in\Omega$, by multiplying the above relation
by $Q$, we obtain
\be\label{eqasympnl}
\dt u = - \dx \left(
QA(\Ecal(u)) \bar{U}_1
\right).
\ee

Once again, this equilibrium equation involves a
nonlinear differential operator in the right-hand side since
$\bar{U}_1$ is a nonlinear map applied to first-order space
derivatives.

Similarly to the ``linear''  case governed by (\ref{eqlim}), we want
interpret (\ref{eqasympnl}) as a system of {\it diffusion equations.} 
We thus assume the existence of a convex entropy $\Phi:\Omega\to\RR$
which satisfies all the compatibility conditions imposed in the
previous section. The matrix $D^2_U \Phi(U) D_U
F(U)$ to be symmetric for all $U\in\Omega$ and, in addition, we assume that
the compatibility conditions (\ref{cond1}) and (\ref{cond2}) hold. 
Smooth solutions to (\ref{systnl}) satisfy the additional balance law 
\be\label{entropnl}
\eps\dt\Phi(U^\eps) + \dx\Psi(U^\eps) =
-\frac{1}{\eps^m} D_U\Phi(U^\eps)R(U^\eps).
\ee

Equipped with this convex entropy, we observe that
$\bar{U}_1$ is, equivalently, a solution to the nonlinear algebraic system
$$
\begin{aligned}
&D^2_U\Phi(\Ecal(u))R(\Ecal(u)+M(0)U_1) = -D^2_U\Phi(\Ecal(u))\dx F(\Ecal(u)),\\
&QU_1=0,
\end{aligned}
$$
for any convex entropy compatible with the relaxation 
term.

Next, consider the matrix $S$ defined by (\ref{defS}) and the
vector $v$ given by (\ref{defv}). In view of 
(\ref{eqSTv}), the above system is equivalent to  
\be\label{eqSTv2}
\begin{aligned}
& D^2_U \Phi(\Ecal(u)) R(\Ecal(u)+M(0)U_1) = -S^Tv,\\
& QU_1=0.
\end{aligned}
\ee
With some abuse of notation and for the sake of clarity, we set
$$
\Ncal_u(U_1) = D^2_U \Phi(\Ecal(u)) R(\Ecal(u)+M(0)U_1) 
$$
and introduce the notation
\be\label{defNum1}
\bar{U}_1 = \Ncal^{-1}_u(-S^T v).
\ee
Hence, the equilibrium equation (\ref{eqasympnl}) reads as follows:
\be\label{diffnl}
\dt u =\dx\left(
-S\Ncal^{-1}_u(-S^T v)
\right).
\ee
Once again, we note the crucial role played by the convex
entropies. Indeed, we now exhibit the limiting system of equations satisfied by the
equilibrium entropy $\Phi(\Ecal(u))$. We skip here the details of the 
computation which are similar to the linear case. 
Lemma~\ref{lpsicons} still holds, as well as the entropy expansion
(\ref{exp1}) and (\ref{exp2}). In fact, only the entropy relaxation
source term expansion changes. We now obtain 
$$
D_U\Phi(U^\eps)R(U^\eps) = 
\eps^{m+1} \bar{U}_1^T D^2_U \Phi(\Ecal(u))
R(\Ecal(u)+M(0)\bar{U}_1).
$$

Plugging these expansions into (\ref{entropnl}) and considering first-order terms, we find 
\be\label{entropasnl}
\begin{aligned}
\dt\Phi(\Ecal(u)) =& 
-\dx\left(
D_U \Psi(\Ecal(u)) \bar{U}_1
\right)
-\bar{U}_1^T D^2_U \Phi(\Ecal(u))
R(\Ecal(u)+M(0)\bar{U}_1).
\end{aligned}
\ee
To conclude this section, we show that the asymptotic
system of equations (\ref{diffnl}) is of diffusive-type, and that an associated 
mathematical entropy is non-decreasing.

\begin{lemma}\label{lnl1}
Let $\bar{U}_1$ be given by (\ref{eqU1nl}). Assume the existence of a
non-negative map $c(u)\ge0$ such that
\be\label{asnl}
R(\Ecal(u)+M(0)\bar{U}_1) = c(u) \bar{U}_1,
\qquad u\in\omega.
\ee
Then the limiting equation (\ref{eqasympnl}) is nonlinearly dissipative
with respect to the entropy in the following sense:
\be\label{dissipnl}
-v^T S\Ncal^{-1}_u(-S^T v) \ge 0,
\qquad  u\in\omega,
\ee
where $S$ is given by (\ref{defS}) and $v$ by (\ref{defv}).

Moreover, the entropy is decreasing as follows:
\be\label{decreasnl}
\dt\Phi(\Ecal(u)) \le
-\dx\left(
D_U \Psi(\Ecal(u)).\bar{U}_1
\right).
\ee
\end{lemma}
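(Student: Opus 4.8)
The plan is to reduce both claims to the single algebraic identity that defines $\bar{U}_1$ in the entropy-weighted form, namely the first line of \eqref{eqSTv2}, which reads $\Ncal_u(\bar{U}_1) = -S^T v$. Everything then follows from the positivity of the Hessian $D^2_U\Phi(\Ecal(u))$ together with the structural hypothesis \eqref{asnl}; no new differential computation is needed beyond the entropy law \eqref{entropasnl} already derived.

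For the dissipativity inequality \eqref{dissipnl}, I would first observe that $\Ncal^{-1}_u(-S^T v) = \bar{U}_1$ by the very definition \eqref{defNum1}, so that the scalar to control is $-v^T S \bar{U}_1$. Rewriting it as $-(S^T v)^T \bar{U}_1$ and substituting $S^T v = -\Ncal_u(\bar{U}_1) = -D^2_U\Phi(\Ecal(u))R(\Ecal(u)+M(0)\bar{U}_1)$ turns it into
\[
-v^T S \bar{U}_1 = \big(D^2_U\Phi(\Ecal(u))R(\Ecal(u)+M(0)\bar{U}_1)\big)^T \bar{U}_1.
\]
At this point I would invoke \eqref{asnl}, that is $R(\Ecal(u)+M(0)\bar{U}_1)=c(u)\bar{U}_1$, together with the symmetry of $D^2_U\Phi(\Ecal(u))$, to reach $-v^T S\bar{U}_1 = c(u)\,\bar{U}_1^T D^2_U\Phi(\Ecal(u))\bar{U}_1$. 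Since $\Phi$ is convex its Hessian is positive, and since $c(u)\ge 0$ by assumption, the right-hand side is non-negative, which is exactly \eqref{dissipnl}.

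For the entropy decay \eqref{decreasnl}, I would start from the already-derived equilibrium entropy law \eqref{entropasnl} and isolate its zeroth-order source term $-\bar{U}_1^T D^2_U\Phi(\Ecal(u))R(\Ecal(u)+M(0)\bar{U}_1)$. Applying \eqref{asnl} once more rewrites this term as $-c(u)\,\bar{U}_1^T D^2_U\Phi(\Ecal(u))\bar{U}_1$, which is non-positive by the same convexity and sign argument. Dropping this non-positive contribution from \eqref{entropasnl} immediately yields the stated inequality.

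I do not expect a genuine obstacle here, since \eqref{asnl} is precisely the condition that aligns the nonlinear relaxation response with $\bar{U}_1$ in the entropy metric. The only point requiring care is the bookkeeping of transposes: one must read $S^T v$ off the transposed, entropy-weighted system \eqref{eqSTv2} rather than off \eqref{eqU1nl}, and exploit that $D^2_U\Phi(\Ecal(u))$ is symmetric so that $\big(D^2_U\Phi(\Ecal(u))\,\bar{U}_1\big)^T\bar{U}_1 = \bar{U}_1^T D^2_U\Phi(\Ecal(u))\,\bar{U}_1$. It is worth emphasizing that \eqref{asnl} is doing all the heavy lifting: it is an a posteriori structural condition on the pair $(R, M(0))$ that substitutes for the kernel hypotheses \eqref{kerDR}--\eqref{kerImDR} used in the linear regime, and in the applications it must be verified case by case.
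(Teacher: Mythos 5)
Your proposal is correct and follows essentially the same route as the paper: both proofs use \eqref{eqSTv2} together with the definition \eqref{defNum1} to identify $-v^T S\Ncal^{-1}_u(-S^T v)$ with $\bar{U}_1^T D^2_U\Phi(\Ecal(u))R(\Ecal(u)+M(0)\bar{U}_1)$, then invoke \eqref{asnl} and the convexity of $\Phi$ for \eqref{dissipnl}, and finally drop the non-positive source term in \eqref{entropasnl} to obtain \eqref{decreasnl}. Your explicit handling of the transposes and of the symmetry of the Hessian merely spells out details the paper leaves implicit.
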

\begin{proof}
Arguing (\ref{eqSTv2}) and the definition (\ref{defNum1}), we have
$$
-v^T S\Ncal^{-1}_u(-S^T v) = \bar{U}_1^T D^2_U \Phi(\Ecal(u))
  R(\Ecal(u)+M(0)\bar{U}_1).
$$
By recalling (\ref{asnl}), we obtain
$$
-v^T S\Ncal^{-1}_u(-S^T v) = c(u) \bar{U}_1^T D^2_U \Phi(\Ecal(u))\bar{U}_1.
$$
The inequality (\ref{dissipnl}) follows from the convexity of the entropy.
Recalling (\ref{entropasnl}) we obtain (\ref{decreasnl}), and the proof is completed.
\end{proof}


\section{Physical examples}

\subsection*{Euler equations with friction term}

Many models involving distinct physical scales enter the 
framework proposed in the present paper and, specifically,
we will now illustrate the interest of our framework with four examples.
We begin with the isentropic Euler equations supplemented with a friction term (cf.~\cite{bib_marcati,bib_vazquez} for further details). 
The asymptotics for this model has been already considered in the literarure 
and, more recently, relevant numerical techniques have been proposed \cite{bib_ounaissa,bib_ccgrs}.  
 (See also \cite{bib_marcati2,bib_bt}.) Importantly, this model satisfies all of the conditions required in 
Section~3, above. 

The Euler model with friction reads 
\be\label{Eulerfriction}
\begin{aligned}
& \eps\dt\rho +\dx\rho v=0,\\
& \eps\dt\rho v+\dx(\rho v^2+p(\rho)) = -\frac{1}{\eps} \rho v,
\end{aligned}
\ee
where $\rho>0$ denotes the density and $v\in\RR$ the 
velocity of a compressible fluid. The pressure function $p:\RR^+\to\RR^+$ is assumed to be
sufficiently regular and satisfy $p'(\rho)>0$, so that the first-order homogeneous system 
associated with \eqref{Eulerfriction} is strictly hyperbolic.

In view of (\ref{syst_1}), we should set
\be\label{notationEF}
U=\left(\begin{array}{c}
\rho \\ \rho v
\end{array}\right),
\qquad
F(U)=\left(\begin{array}{c}
\rho v \\ \rho v^2 + p(\rho)
\end{array}\right),
\quad\quad
R(U)=\left(\begin{array}{c}
0 \\ \rho v
\end{array}\right), 
\ee
which corresponds to the matrix
$$
Q=(1~0),
$$
and scalar local equilibria $u=\rho$, with 
$$
\Ecal(u) = \left(\begin{array}{c}
\rho \\ 0
\end{array}\right).
$$
As required, we also have $QF(\Ecal(u))=0$, and it is easily checked that all our assumptions of the previous sections hold. 

Considering now the asymptotic diffusive
regime in the limit $\eps \to 0$, we note that 
the equilibrium solution must satisfy (\ref{eqlim}), i.e. 
$$
\dt\rho = -\dx\left(
QA(\Ecal(u)) \, U_1
\right),
$$
where
$$
A(\Ecal(u)) = \left(\begin{array}{cc}
0 & 1 \\ p'(\rho) & 0
\end{array}\right),
$$
and $U_1$ is the unique solution to (\ref{eqU1}). Since 
$$
B(\Ecal(u)) = \left(\begin{array}{cc}
0 & 0 \\ 0 & 1
\end{array}\right), 
\quad\quad
\dx F(\Ecal(u)) = \left(\begin{array}{c}
0 \\ \dx p(\rho)
\end{array}\right),
$$
the diffusive regime associated with this Euler model
with friction is described by the equation 
\be\label{diffEuler}
\dt\rho = \partial^2_x \big( p(\rho) \big).
\ee

Based on Theorem \ref{maintheo}, we observe that the diffusive nature
of (\ref{diffEuler}) follows from the existence of a
convex entropy which is compatible with the relaxation source term in the sense
 (\ref{cond1})-(\ref{cond2}). Indeed, by introducing the internal
energy $e(\rho)>0$ defined by 
$$
e'(\rho)=\frac{p(\rho)}{\rho^2},
$$
we see that smooth solutions to (\ref{Eulerfriction}) satisfy
$$
\eps\dt\left(\rho\frac{v^2}{2}+\rho e(\rho)\right)
+
\dx\left(\rho\frac{v^2}{2}+\rho e(\rho)+p(\rho)\right)v
=-\frac{1}{\eps}\rho v^2.
$$
The function $\Phi(U)=\rho\frac{v^2}{2}+\rho e(\rho)$ is a convex
entropy satisfying the compatibility conditions with the relaxation terms.


\subsection*{The $M1$ model for radiative transfer}

The second example of interest relies on a more complex physical set-up, relevant in  
radiative transfer and referred to as the $M1$-model \cite{bib_DF,bib_minerbo}. 
(See also \cite{bib_bcd,bib_buetcord,bib_BD2,bib_goudon}.) 
This model reads 
\be\label{eqRT}
\begin{aligned}
& \eps\dt e +\dx f =\frac{1}{\eps}(\tau^4-e),\\
& \eps\dt f+\dx \Big(  \chi\left(f/e\right)e \Big) =
-\frac{1}{\eps}f,\\
& \eps\dt\tau=\frac{1}{\eps}(e-\tau^4),
\end{aligned}
\ee
where $e>0$ is the radiative energy and $f$ the radiative flux,
restricted by the ``flux limitation'' condition 
$$
\left| \frac{f}{e} \right| \le 1,
$$
and $\tau>0$ denotes the temperature. 
The function
$\chi:[-1,1]\to\RR^+$ stands for the Eddington factor defined by
$$
\chi(\xi) =\frac{ 3+4\xi^2}{5+2\sqrt{4-3\xi^2}}.
$$

Once again, we rely on the notation introduced in the previous
sections (cf.~system (\ref{syst_1})) and we write
$$
U=\left(\begin{array}{c}
e \\ f \\ \tau
\end{array}\right),
\qquad
F(U)=\left(\begin{array}{c}
f \\ \chi(\frac{f}{e})e \\ 0
\end{array}\right), 
\quad\quad
R(U)=\left(\begin{array}{c}
e-\tau^4 \\ f \\ \tau^4-e
\end{array}\right).
$$
The local equilibria are described by the map 
$$
\Ecal(u)=\left(\begin{array}{c}
\tau^4 \\ 0 \\ \tau
\end{array}\right),
$$
where $u=\tau+\tau^4$ is now a scalar. 
We set $Q=(1~0~1)$ and, in agreement with (\ref{H4_1}), we have $QF(\Ecal(u))=0$. 

The asymptotic regime is governed by (\ref{eqlim}) and to exhibit its
explicit formulation we need the expression of $A(\Ecal(u))$ and $U_1$. A straightforward
calculation gives
$$
A(\Ecal(u)) = 
\left(\begin{array}{ccc}
0 & 1 & 0 \\ 
\chi(0) & \chi'(0) & 0 \\
0 & 0 & 0
\end{array}\right)
=
\left(\begin{array}{ccc}
0 & 1 & 0 \\ 
\frac{1}{3} & 0 & 0 \\
0 & 0 & 0
\end{array}\right),
$$
while $U_1$ is the solution to (\ref{eqU1}) which here reads
$$
\begin{aligned}
\left(\begin{array}{ccc}
1 & 0 & -4\tau^3 \\ 
0 & 1 & 0 \\
-1 & 0 & 4\tau^3
\end{array}\right)U_1 & =
\left(\begin{array}{c}
0 \\ 
\dx(\frac{\tau^4}{3}) \\
0 
\end{array}\right),\\
 (1~0~1) U_1 & =0.
\end{aligned}
$$
We thus obtain
$$
U_1 = \left(\begin{array}{c}
0 \\ 
\frac{4}{3} \tau^3\dx\tau \\
0
\end{array}\right),
$$
and, according with (\ref{eqlim}), the asymptotic regime is governed
by the following diffusion equation:
$$
\dt(\tau+\tau^4) = \dx\left(
\frac{4}{3}\tau^3\dx \tau \right).
$$


\subsection*{A coupled Euler/$M1$ model}

We propose here an example of system that degenerates into a system of diffusion equations of dimension $n>1$. To do so, we couple the Euler model (\ref{Eulerfriction}) with the $M1$ model (\ref{eqRT}) as follows:
\be\label{EulerM1}
\begin{aligned}
& \eps\dt\rho +\dx\rho v=0,\\
& \eps\dt\rho v+\dx(\rho v^2+p(\rho)) = -\frac{\kappa}{\eps} \rho v+\frac{\sigma}{\eps} f,\\
& \eps\dt e +\dx f =0,\\
& \eps\dt f+\dx \chi\left(\frac{f}{e}\right)e = -\frac{\sigma}{\eps}f,
\end{aligned}
\end{equation}
in the notation previously introduced. Here, $\kappa$ and $\sigma$ denote positive constants. Even though this is a toy model, the pressure has to be sufficiently small in order to represent an application of physical 
interest. We will therefore consider the following pressure law:
\begin{equation*}
p(\rho)=C_p \rho^\eta, \qquad 
C_p\ll 1, 
\qquad 
\eta>1.
\end{equation*}
In the formalism (\ref{syst_1}) we need to set 
\begin{equation*}
U=\begin{pmatrix}\rho\\ \rho v\\ e\\f \end{pmatrix},
\qquad
F(U)=\begin{pmatrix}\rho v\\ \rho v^2+p(\rho)\\ f\\ \chi(\frac{f}{e})e \end{pmatrix},
\qquad
R(U)=\begin{pmatrix}0\\ \kappa \rho v-\sigma f\\ 0\\ \sigma f \end{pmatrix}.
\end{equation*}
The local equilibrium is given by 
\begin{equation*}
 \Ecal(u)=\begin{pmatrix}\rho\\ 0\\ e\\0 \end{pmatrix},
 \qquad
u=QU=\begin{pmatrix}\rho\\ e\end{pmatrix},
\qquad
Q=\begin{pmatrix}1&0&0&0\\0&0&1&0\end{pmatrix}.
\end{equation*}
Once again, one has $QF(\Ecal(u))=0$. 
To derive the asymptotic regime, we exhibit $A(\Ecal(u))$ and $U_1$:
\begin{equation*}
A(\Ecal(u))=\begin{pmatrix}0&1&0&0\\p'(\rho)&0&0&0\\0&0&0&1\\0&0&\frac{1}{3}&0\end{pmatrix},
\qquad
U_1=\begin{pmatrix}0\\ \frac{1}{\kappa}\bigl[-\partial_x p(\rho)-\frac{1}{3}\partial_x e\bigr]\\0\\-\frac{1}{3\sigma}\partial_x e\end{pmatrix}.
\end{equation*}
The asymptotic diffusive regime of the system (\ref{EulerM1}) is therefore given by 
\begin{equation}\label{EM1asy}
\begin{aligned}
&\partial_t \rho-\frac{1}{\kappa}\partial_x^2 p(\rho)-\frac{1}{3\kappa}\partial_x^2 e=0,\\
&\partial_t e-\frac{1}{3\sigma}\partial_x^2 e=0.
\end{aligned}
\end{equation}


\subsection*{Shallow water with strong friction effect}

The last suggested example is devoted to the well-known shallow-water
model supplemented by a strong friction term in a late-time regime where the 
friction is assumed to dominate over the convection. 

This model is given as
follows:
\be\label{SWfriction}
\begin{aligned}
& \eps \dt h +\dx hv=0,\\
& \eps \dt hv+\dx \big( h \, v^2 + p(h) \big) = -\frac{\kappa^2(h)}{\eps^2} \, g \, hv|hv|,
\end{aligned}
\ee
where $h>0$ is the water height, $v\in\RR$ the velocity and
$p(h)=g\frac{h^2}{2}$ the pressure law. Here, $g>0$ is the usual
gravity constant,
while the friction coefficient $\kappa:\RR^+ \to\RR^+$ is a given and positive function. In \cite{bib_marche}, several
examples of friction $\kappa$ are proposed. A standard choice is
$\kappa(h)=\frac{\kappa_0}{h}$ where $\kappa_0>0$ is a given
parameter.

We note that this model enters the framework of the nonlinear
extension governed by (\ref{systnl}) with $m=2$. According to the
notation involved in (\ref{systnl}), we have set
$$
U=\left(\begin{array}{c}
h \\ hv
\end{array}\right),
\qquad
F(U)=\left(\begin{array}{c}
hv \\ hv^2+p(h)
\end{array}\right), 
\quad\quad
R(U)=\left(\begin{array}{c}
0 \\ \kappa^2(h)ghv|hv|
\end{array}\right).
$$
Concerning the equilibrium, we have
$$
\Ecal(u)=\left(\begin{array}{c}
h \\ 0
\end{array}\right),
$$
where $u=h$ is a scalar and $Q=(1~0)$. The
assumption (\ref{Hadd}) easily holds since 
$$
R(\Ecal(u)+\eps U) = \eps^2 R\big( \Ecal(U) + M(\eps)U \big),
$$
where 
$$
M(\eps) = \left(\begin{array}{cc}
\eps & 0 \\ 0 & 1
\end{array}\right).
$$

We turn our attention now to the asymptotic regime which is governed by the nonlinear
diffusion equation (\ref{eqasympnl}). To get its explicit from, we
have to exhibit $U_1=(\alpha~\beta)^T$ the solution to (\ref{eqU1nl})
which reads as follows:
$$
\begin{aligned}
& \alpha = 0,\\
& \kappa^2(h)g\beta|\beta| = -\dx p(h).
\end{aligned}
$$
We easily find
$$
\beta = -  \frac{\sqrt{h}\dx h}{\kappa(h)\sqrt{|\dx h|}}, 
$$
and the effective nonlinear diffusion equation is thus 
\be\label{SWplap}
\dt h = \dx\left( {\sqrt{h} \over \kappa(h)} \, 
\frac{\dx h}{\sqrt{|\dx h|}}
\right). 
\ee
This is a nonlinear Laplacian equation (for instance, see \cite{bib_ju} and references therein).

Lemma \ref{lnl1} applies here with the following entropy. By 
introducing the internal energy $e(h)=gh/2$, we see that  smooth solutions to (\ref{SWfriction}) satisfy 
$$
\eps\dt\left( h\frac{v^2}{2}+g\frac{h^2}{2} \right)
+
\dx\left(
h\frac{v^2}{2}+gh^2
\right)v=-\frac{\kappa^2(h)}{\eps^2}ghv^2|hv|.
$$
The entropy $\Phi(U)=h\frac{v^2}{2}+g\frac{h^2}{2}$
satisfies all the required properties, and the condition (\ref{asnl}) holds since 
$$
R(\Ecal(u)+M(0)\bar{U}_1) = \left(\begin{array}{c}
0 \\ \dx p(h)
\end{array}\right),
$$
where $\bar{U}_1=(0~\beta)^T$. As a consequence, we obtain
$R(\Ecal(u)+M(0)\bar{U}_1)=c(u)\bar{U}_1$ with
$$
c(u) = g\kappa(h)\sqrt{h|\dx h|}\ge 0.
$$
Hence, the limit equation (\ref{SWplap}) is of diffusive-type in the sense of Lemma \ref{lnl1}.


\section{Asymptotic-preserving schemes} 

\subsection*{Objective}

In this section, we consider the numerical approximation of  solutions to 
(\ref{syst_1}). Our goal is to derive a class of numerical schemes that 
restore
the relevant asymptotic regime, given by (\ref{eqlim}), in the limit $\eps \to 0$.
One of the main difficulties when deriving asymptotic-preserving
schemes lies in the independent role played by each $\eps$ and the
mesh size. More precisely, the limit discrete diffusion equation (as
$\eps$ tends to zero) must be obtained independently of the space mesh-size.

Such a numerical problem was investigated during the last decade on several specific examples. For instance, in \cite{bib_ounaissa}, the
Euler equations with friction term are considered. Relating works to
radiative transfer and the $M1$-model are given in \cite{bib_buetcord,bib_goudon}. The
reader is also referred to \cite{bib_bt} where distinct physical applications
are proposed.

In the present work, we propose a generalization of a numerical scheme
derived to approximate the solutions to the Euler equations with
friction and the $M1$-model \cite{bib_bt}. To sketch this suggested
numerical procedure, we first consider a standard finite volume
method to approximate weak solutions to the homogeneous system
associated with (\ref{syst_1}) in which we have omitted $\eps$:
\be\label{syshomo}
\dt U+\dx F(U) =0.
\ee
Next, we derive a suitable
correction to obtain a finite volume discretization of the source
term. Hence, the corrected finite volume method gives approximate
solutions of the following system:
\be\label{sysRref}
\dt U+\dx F(U) = -\gamma R(U),
\ee
where $\gamma>0$ is a fixed parameter. Finally, the asymptotic
behavior of the scheme is analyzed. We
consider a late-time compatible discretization and we fix
$\gamma=\frac{1}{\eps}$. The asymptotic scheme is thus obtained in
the limit of $\eps$ to
zero. Modulo a suitable correction, it is thus
proved to be asymptotic preserving.


\subsection*{A discretization of (\ref{sysRref})}

As a first step, we suggest to consider the well-known Godunov-type scheme
introduced by Harten, Lax and van Leer \cite{bib_hlv} with a single constant
intermediate state, to approximate the weak solutions to  (\ref{syshomo}).

We consider a uniform mesh made of cells $[x_{i-\demi},x_{i+\demi})$
where $x_{i+\demi}=x_i+\frac{\Delta x}{2}$ for all $i\in\ZZ$ with a
constant cell size $\Delta x$. The time discretization is defined by
$t^{n+1}=t^n+\Delta t$ where the time increment will be restricted
later on by a CFL like condition.

We define the discrete initial data as follows:
$$
U^0(x) = \frac{1}{\Delta x}
\int_{x_{i-\demi}}^{x_{i+\demi}} U(x,0)dx,
\qquad x\in[x_{i-\demi},x_{i+\demi}).
$$
We seek for a piecewise constant approximation of the exact
solution of (\ref{syshomo}) at the time $t^n$, 
$$
U^n(x)=U_i^n, 
\qquad x\in[x_{i-\demi},x_{i+\demi}),
$$
with $U_i^n\in\Omega$ for all $i\in\ZZ$.

By considering a suitable sequence of approximate Riemann solvers, we
can evolve this approximation and get a piecewise constant
function $\tilde{U}^{n+1}(x)$ which is an approximation of the solution to 
(\ref{syshomo}) at the time $t^n+\Delta t$. Following Harten, Lax
and van Leer \cite{bib_hlv}, at each cell interface we use the following
approximate Riemann solver:
\be\label{ApRiem}
\tilde{U}_\Rcal(\frac{x}{t};U_L,U_R) =\left\{
\begin{aligned}
& U_L, \quad \frac{x}{t}<-b,\\
& \tilde{U}^\star, \quad -b<\frac{x}{t}<b,\\
& U_R, \quad \frac{x}{t}>b,
\end{aligned}\right.
\ee
where $b>0$ is a fixed and sufficiently large constant, and
\be\label{defUtild} 
\tilde{U}^\star = \frac{1}{2}(U_L+U_R)-\frac{1}{2b}
(F(U_R)-F(U_L)).
\ee
As a consequence, as soon as the following CFL restriction holds:
\be\label{cfl} 
b\frac{\Delta t}{\Delta x}\le\frac{1}{2},
\ee
we are considering a juxtaposition of non-interacting approximate
Riemann solver (cf.~Figure \ref{figRS}) denoted $\tilde{U}^n_{\Delta
  x}(x,t^n+t)$ for $t\in[0,\Delta t)$.

\begin{figure}[htb]
\begin{center}
  \centering \includegraphics[width=0.9\textwidth]{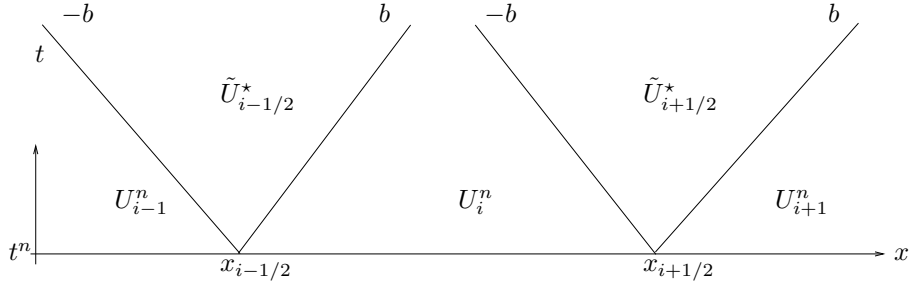}
\caption{HLL scheme: Juxtaposition of approximated Riemann problems.}
\label{figRS}
\end{center}
\begin{picture}(0,0)(0,0)
 \put(-130,60){$U_{i-1}^n$}
 \put(  0,60){$U_{i}^n$}
 \put(120,60){$U_{i+1}^n$}

 \put(-90,100){$\tilde{U}^\star_{i-\demi}$}
 \put( 70,100){$\tilde{U}^\star_{i+\demi}$}

 \put(-150 ,130){$-b$}
 \put(-30,130){$b$}
 \put(10,130){$-b$}
 \put(140,130){$b$}

 \put(-90 ,36){$x_{i-\demi}$}
 \put(70,36){$x_{i+\demi}$}

 \put(165,40){$x$}
 \put(-170,40){$t^n$}
 \put(-160,115){$t$}
\end{picture}
\end{figure}

The updated approximated solution at time $t^{n+1}$ is thus defined as
follows:
\be\label{Utildinte}
\tilde{U}_i^{n+1} = \frac{1}{\Delta x}
\int_{x_{i-\demi}}^{x_{i+\demi}} \tilde{U}^n_{\Delta x}(x,t^n+\Delta
t) dx.
\ee
Since we have
\be\label{defUtildnum} 
\tilde{U}^\star_{i+\demi} = \frac{1}{2}(U_i^n+U_{i+1}^n)-\frac{1}{2b}
(F(U_{i+1}^n)-F(U_i^n)),
\ee
an easy computation gives the following standard conservation form:
\be\label{schemehomo}
\tilde{U}_i^{n+1} = U_i^n - \frac{\Delta t}{\Delta x}
(F^{HLL}_{i+\demi} - F^{HLL}_{i-\demi}),
\ee
where we have set
\be\label{defFHLL} 
F^{HLL}_{i+\demi} = \frac{1}{2}(F(U_i^n)+F(U_{i+1}^n))
-\frac{b}{2}(U_{i+1}^n-U_i^n).
\ee

For simplicity in the presentation, we have adopted here a constant numerical cone
of dependence (cf.~Figure \ref{figRS}) characterized by a single speed 
parameter $b>0$. As prescribed in \cite{bib_hlv} (cf.~also \cite{bib_lev,bib_toro}), each
cone of dependence can be variable and defined by a pair
$(b^-_{i+\demi},b^+_{i+\demi})$ with
$b^-_{i+\demi}<b^+_{i+\demi}$. However, for the sake of simplicity and
without genuine loss of generality, we present our strategy for the simpler case 
$b^+_{i+\demi}=-b^-_{i+\demi}=b>0$.  

At this level, a first remark must be done concerning the invariant domain property for 
 the scheme (\ref{schemehomo}). It suffices to ensure that 
$\tilde{U}^\star_{i+\demi}$ belongs to $\Omega$ for all $i\in\ZZ$ to
deduce that $U_i^{n+1}$ in $\Omega$ for all $i\in\ZZ$. Indeed, from
(\ref{Utildinte}), we have
$$
\tilde{U}_i^{n+1} = b\frac{\Delta t}{\Delta x}
\tilde{U}^\star_{i-\demi} + \left(1-2b\frac{\Delta t}{\Delta x}\right)
U_i^n + b\frac{\Delta t}{\Delta x}\tilde{U}^\star_{i+\demi}
$$
and, based on the CFL restriction (\ref{cfl}), the above relation is
 a convex combination of states in $\Omega$. Since $\Omega$ is a 
convex set, we deduce that $\tilde{U}_i^{n+1}$ belongs to
$\Omega$.

Our main necessary condition for the above argument is that $\tilde{U}^\star_{i+\demi}\in\Omega$
for all $i$ in $\ZZ$. But, once again, $\tilde{U}^\star_{i+\demi}$ can be seen
as a convex combination, as follows:
$$
\tilde{U}^\star_{i+\demi}= 
\frac{1}{2}\left(U_i^n+\frac{1}{b}F(U_i^n)\right)
+ 
\frac{1}{2}\left(U_{i+1}^n-\frac{1}{b}F(U_{i+1}^n)\right).
$$
Since $\Omega$ is an open convex set, we can choose $b$ to be large enough so that to
enforce the condition $\tilde{U}^\star_{i+\demi}\in\Omega$.

Next, we modify this approximate Riemann solver and introduce a
discretization of the source-term in order to approximate the
solutions of (\ref{sysRref}). Similar modifications were made for specific problems
in \cite{bib_bt} and \cite{bib_bcd,bib_buetcord}, while we propose here a general approach based 
on matrix-valued free-parameters.
We modify the approximate Riemann solver (\ref{ApRiem}) as follows:
\be\label{ApRiemCor}
U_\Rcal(\frac{x}{t};U_L,U_R) =\left\{
\begin{aligned}
& U_L, \quad \frac{x}{t}<-b,\\
& U^{\star L}, \quad -b<\frac{x}{t}<0,\\
& U^{\star R}, \quad 0<\frac{x}{t}<b,\\
& U_R, \quad \frac{x}{t}>b,
\end{aligned}\right.
\ee
where we have set
\be\label{defUstar}
\begin{aligned}
& U^{\star L} = \underline{\alpha}\tilde{U}^\star + 
    (I-\underline{\alpha})(U_L-\bar{R}(U_L)),\\
& U^{\star R} = \underline{\alpha}\tilde{U}^\star + 
    (I-\underline{\alpha})(U_R-\bar{R}(U_R)).
\end{aligned}
\ee
Here, $\underline{\alpha}$ denotes a $N\times N$ matrix defined as
follows:
\be\label{defsbar}
\underline{\alpha} = \left( I +\frac{\gamma\Delta
  x}{2b}(I+\underline{\sigma})
\right)^{-1},
\ee
and
\be\label{defRbar}
\bar{R}(U) = (I+\underline{\sigma})^{-1} R(U).
\ee
The $N\times N$ matrices $I$ and $\underline{\sigma}$ respectively
denote the identity matrix and a parameter matrix to be defined.

A choice for the matrix $\underline{\sigma}$ will be made later, 
which will turn out to govern 
the asymptotic regime.  At this point, the matrix $\underline{\sigma}$ is assumed to 
be such 
that the inverse matrices in (\ref{defsbar}) and
(\ref{defRbar}) are well-defined.

We adopt the modified approximate Riemann solver (\ref{ApRiemCor}) to
derive a modified Godunov type scheme in the spirit of \cite{bib_bt}. At
each cell interface $x_{i+\demi}$, we set the approximate Riemann
solver $U_{\Rcal}(\frac{x-x_{i+\demi}}{t-t^n}; U_i^n,U_{i+1}^n)$ to
define a juxtaposition of modified approximate Riemann solver,
denoted by $U^n_{\Delta x}(x,t^n+t)$ for
$t\in[0,\Delta t)$. (See Figure \ref{figRSC}.) 

\begin{figure}[htb]
\begin{center}
  \centering \includegraphics[width=0.9\textwidth]{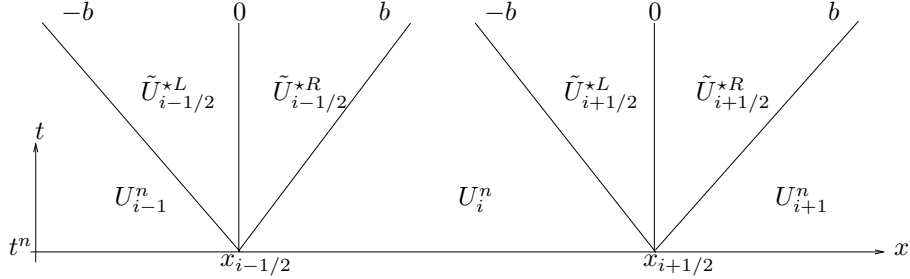}
\caption{Juxtaposition of modified approximated Riemann problems.}
\label{figRSC}
\end{center}
\begin{picture}(0,0)(0,0)
 \put(-130,60){$U_{i-1}^n$}
 \put(  0,60){$U_{i}^n$}
 \put(120,60){$U_{i+1}^n$}

 \put(-120,100){$\tilde{U}^{\star L}_{i-\demi}$}
 \put(-70,100){$\tilde{U}^{\star R}_{i-\demi}$}
 \put( 40,100){$\tilde{U}^{\star L}_{i+\demi}$}
 \put( 90,100){$\tilde{U}^{\star R}_{i+\demi}$}

 \put(-150 ,130){$-b$}
 \put(-30,130){$b$}
 \put(10,130){$-b$}
 \put(140,130){$b$}

 \put(-85,130){$0$}
 \put( 72,130){$0$}

 \put(-90,37){$x_{i-\demi}$}
 \put(70, 37){$x_{i+\demi}$}

 \put(165,41){$x$}
 \put(-170,41){$t^n$}
 \put(-160,85){$t$}
\end{picture}
\end{figure}

Thanks to the CFL condition (\ref{cfl}), such a juxtaposition is
non-interacting.

At time $t^{n+1}$, the updated approximated solution is given as
follows for all $i\in\ZZ$:
\be\label{Un1inte}
U_i^{n+1} = 
\int_{x_{i-\demi}}^{x_{i+\demi}} U^n_{\Delta x}(x,t^n+\Delta t)dx.
\ee
We compute this integral form and obtain 
\be\label{schaux}
\begin{aligned}
& \frac{1}{\Delta t}(U_i^{n+1}-U_i^n) +\frac{1}{\Delta x}
(\underline{\alpha}_{i+\demi} F^{HLL}_{i+\demi}
-\underline{\alpha}_{i-\demi} F^{HLL}_{i-\demi})
\\
& =
\frac{1}{\Delta x}
  (\underline{\alpha}_{i+\demi}-\underline{\alpha}_{i-\demi}) F(U_i^n)
-\frac{b}{\Delta x} (I-\underline{\alpha}_{i-\demi})
  \bar{R}_{i-\demi}(U_i^n)
\\
& \quad - 
\frac{b}{\Delta x} (I-\underline{\alpha}_{i+\demi})
  \bar{R}_{i+\demi}(U_i^n),
\end{aligned}
\ee
where the numerical flux  $F^{HLL}_{i+\demi}$ is given by
(\ref{defFHLL}). The discretized source-term can be rewritten in a more
relevant form: 
$$
\frac{b}{\Delta x}(I-\underline{\alpha}_{i+\demi})
  \bar{R}_{i+\demi}(U_i^n) = 
\frac{b}{\Delta x}\underline{\alpha}_{i+\demi}
 (\underline{\alpha}_{i+\demi}^{-1}-I)
  \bar{R}_{i+\demi}(U_i^n).
$$
In view of the definition of $\underline{\alpha}_{i+\demi}$ (see
(\ref{defsbar})), we deduce that 
$$
\frac{b}{\Delta x}(I-\underline{\alpha}_{i+\demi})
  \bar{R}_{i+\demi}(U_i^n) = 
\frac{\gamma}{2}\underline{\alpha}_{i+\demi}R(U_i^n) 
$$
and, similarly, 
$$
\frac{b}{\Delta x}(I-\underline{\alpha}_{i-\demi})
  \bar{R}_{i-\demi}(U_i^n) = 
\frac{\gamma}{2}\underline{\alpha}_{i-\demi}R(U_i^n).
$$
As a consequence, the scheme (\ref{schaux}) reads 
\be\label{schemeR}
\begin{aligned}
& \frac{1}{\Delta t}(U_i^{n+1}-U_i^n) + \frac{1}{\Delta x}
(\underline{\alpha}_{i+\demi} F^{HLL}_{i+\demi}
-\underline{\alpha}_{i-\demi} F^{HLL}_{i-\demi})
\\
& =
\frac{1}{\Delta x}
  (\underline{\alpha}_{i+\demi}-\underline{\alpha}_{i-\demi}) F(U_i^n)
-\frac{\gamma}{2}
(\underline{\alpha}_{i+\demi}+\underline{\alpha}_{i-\demi})R(U_i^n).
\end{aligned}
\ee
This scheme satisfies the following statement.
\begin{theorem}\label{theoconsis}
Assume that the matrix $\underline{\sigma}$ defines a spatially continuous
map. The numerical scheme (\ref{schemeR}) is consistant with the
equation (\ref{sysRref}).

At time $t^n$,  assume $U_i^n\in\Omega$ for all $i\in\ZZ$ and, in 
addition, that the state vectors $U^{\star L}_{i+\demi}$ and
$U^{\star R}_{i+\demi}$, defined by
$$
\begin{aligned}
&  U^{\star L}_{i+\demi} = \underline{\alpha}_{i+\demi}\tilde{U}^\star_{i+\demi} + 
    (I-\underline{\alpha}_{i+\demi})(U_i^n-\bar{R}(U_i^n)),\\
& U^{\star R}_{i+\demi} = \underline{\alpha}_{i+\demi}\tilde{U}^\star_{i+\demi} + 
    (I-\underline{\alpha}_{i+\demi})(U_{i+1}^n-\bar{R}(U_{i+1}^n)),
\end{aligned}
$$
belong to $\Omega$. Then, the updated state vector $U_i^{n+1}$,
defined by (\ref{schemeR}), belongs the set $\Omega$ for all $i$ in $\ZZ$.
\end{theorem}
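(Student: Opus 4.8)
The plan is to establish the two assertions of the theorem separately, since they rest on different mechanisms: the invariant-domain property follows from the wave structure of the modified Riemann solver together with the CFL condition, while consistency follows from a Taylor expansion of the matrix $\underline{\alpha}_{i+\demi}$ in the mesh size.

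I would begin with the invariant-domain property, arguing exactly as in the homogeneous case treated earlier in this section. The updated value $U_i^{n+1}$ in (\ref{Un1inte}) is the average over the cell $[x_{i-\demi},x_{i+\demi})$ of the juxtaposed solvers (\ref{ApRiemCor}), which under the CFL restriction (\ref{cfl}) do not interact. Inspecting the fan structure, only three states populate cell $i$ at time $t^{n+1}$: the state $U^{\star R}_{i-\demi}$ issuing from the left interface with speeds in $(0,b)$, the unperturbed value $U_i^n$ in the central untouched region, and the state $U^{\star L}_{i+\demi}$ issuing from the right interface with speeds in $(-b,0)$. Measuring the widths of these subintervals yields
\be
U_i^{n+1} = b\frac{\Delta t}{\Delta x}\, U^{\star R}_{i-\demi}
+ \Big(1-2\,b\frac{\Delta t}{\Delta x}\Big)\, U_i^n
+ b\frac{\Delta t}{\Delta x}\, U^{\star L}_{i+\demi}.
\ee
Under (\ref{cfl}) the three weights are non-negative and sum to one, so $U_i^{n+1}$ is a convex combination of $U^{\star R}_{i-\demi}$, $U_i^n$ and $U^{\star L}_{i+\demi}$. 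The hypothesis that $U^{\star L}_{i+\demi}$ and $U^{\star R}_{i+\demi}$ belong to $\Omega$ for every index $i$ supplies in particular $U^{\star R}_{i-\demi}\in\Omega$; since also $U_i^n\in\Omega$ and $\Omega$ is convex, we conclude $U_i^{n+1}\in\Omega$, which proves the second assertion.

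For consistency, I would insert a smooth solution of (\ref{sysRref}) into the scheme (\ref{schemeR}) and expand in the mesh size. From the definition (\ref{defsbar}) one has the expansion
\be
\underline{\alpha}_{i+\demi}
= I - \frac{\gamma\,\Delta x}{2b}\,(I+\underline{\sigma}_{i+\demi}) + \Ocal(\Delta x^2),
\ee
so in particular $\underline{\alpha}_{i\pm\demi}\to I$ as $\Delta x\to 0$. Splitting the flux difference via $\underline{\alpha}=I+(\underline{\alpha}-I)$, the leading part $\frac{1}{\Delta x}(F^{HLL}_{i+\demi}-F^{HLL}_{i-\demi})$ is the standard HLL difference and converges to $\dx F(U)$ by consistency of (\ref{defFHLL}); the correction it generates, of the form $-\frac{\gamma}{2b}(\underline{\sigma}_{i+\demi}-\underline{\sigma}_{i-\demi})F(U)+\Ocal(\Delta x)$, together with the term $\frac{1}{\Delta x}(\underline{\alpha}_{i+\demi}-\underline{\alpha}_{i-\demi})F(U_i^n)$ on the right-hand side, vanishes in the limit precisely because the continuity of $\underline{\sigma}$ forces $\underline{\sigma}_{i+\demi}-\underline{\sigma}_{i-\demi}\to 0$. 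Finally the discrete source $-\frac{\gamma}{2}(\underline{\alpha}_{i+\demi}+\underline{\alpha}_{i-\demi})R(U_i^n)$ converges to $-\gamma R(U)$, and collecting these limits reproduces exactly (\ref{sysRref}).

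The main obstacle is the consistency bookkeeping rather than the invariant-domain argument. One must track carefully the $\Ocal(\Delta x)$ contributions produced by the factors $\underline{\alpha}_{i\pm\demi}-I$ and verify that, after division by $\Delta x$, every term not accounted for by $\dx F(U)$ and $-\gamma R(U)$ is either of lower order or cancels; the continuity hypothesis on $\underline{\sigma}$ is exactly what annihilates the surviving interface-difference terms $\underline{\sigma}_{i+\demi}-\underline{\sigma}_{i-\demi}$. The invariant-domain claim, by contrast, is a direct transcription of the convex-combination argument already established for the homogeneous scheme, the only new point being the correct identification of the two intermediate states $U^{\star R}_{i-\demi}$ and $U^{\star L}_{i+\demi}$ that enter cell $i$.
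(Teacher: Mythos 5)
Your proof is correct and follows essentially the same route as the paper's: consistency via the expansion $\underline{\alpha}_{i\pm\demi}=I+\Ocal(\Delta x)$ together with the spatial continuity of $\underline{\sigma}$ annihilating the interface-difference term, and the invariant-domain property via the convex-combination identity under the CFL condition (\ref{cfl}). Your identification of the two states entering cell $i$ as $U^{\star R}_{i-\demi}$ and $U^{\star L}_{i+\demi}$ is the geometrically correct one (the paper's proof writes the $L/R$ labels in the opposite order, an apparent typo), but the argument is otherwise identical.
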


\begin{proof}
The consistency property follows from the definition of
$\underline{\alpha}_{i+\demi}$, given by (\ref{defsbar}). Indeed, we
easily obtain
$$
\underline{\alpha}_{i+\demi} = I +\Ocal(\Delta x),
$$
to deduce the expected consistency of the flux  and the
relaxation source term. Only the term $\frac{1}{\Delta x}
  (\underline{\alpha}_{i+\demi}-\underline{\alpha}_{i-\demi}) F(U_i^n)$
is left over. Recalling (\ref{defsbar}), we have
$$
\frac{1}{\Delta x}
  (\underline{\alpha}_{i+\demi}-\underline{\alpha}_{i-\demi}) F(U_i^n) = 
-\frac{\gamma}{2b}\underline{\alpha}_{i+\demi}
(\underline{\sigma}_{i+\demi}-\underline{\sigma}_{i-\demi})
\underline{\alpha}_{i-\demi}F(U_i^n),
$$
to obtain
$$
\frac{1}{\Delta x}
  (\underline{\alpha}_{i+\demi}-\underline{\alpha}_{i-\demi}) F(U_i^n) = 
\Ocal(\Delta x),
$$
as soon as
$\underline{\sigma}_{i+\demi}-\underline{\sigma}_{i-\demi}=\Ocal(\Delta
x)$. The expected equation consistency is therefore obtained.

Concerning the robustness of the method, from (\ref{Un1inte}), we have
$$
U_i^{n+1} =  
b\frac{\Delta t}{\Delta x}U^{\star L}_{i-\demi} + 
\left(1-2b\frac{\Delta t}{\Delta x}\right) U_i^n + 
b\frac{\Delta t}{\Delta x}U^{\star R}_{i+\demi},
$$
which is nothing but a convex combination in $\Omega$. Then
$U_i^{n+1}$ is in $\Omega$ and the proof is completed.
\end{proof}

To conclude this derivation, observe that the term 
$\frac{1}{\Delta x}
  (\underline{\alpha}_{i+\demi}-\underline{\alpha}_{i-\demi}) F(U_i^n)$
may seem to be a discrepancy in the method. In fact, this term is
standard to derive asymptotic preserving schemes and it can be found in
several works. (See, for instance, \cite{bib_bcd,bib_bt,bib_buetcord}.) 
Our approach allows us recover a scheme proposed earlier in 
\cite{bib_gosse}. 


\subsection*{The linear asymptotic regime}

The scheme (\ref{schemeR}) is now considered to suggest a
discretization of our initial model (\ref{syst_1}). The expected scheme
is thus easily obtained when substituting $\Delta t$ by $\frac{\Delta
  t}{\eps}$ and fixing $\gamma=\frac{1}{\eps}$. The resulting
scheme reads as follows:
\be\label{schR1}
\begin{aligned}
& \frac{\eps}{\Delta t}(U_i^{n+1}-U_i^n) +\frac{1}{\Delta x}
(\underline{\alpha}_{i+\demi} F^{HLL}_{i+\demi}
-\underline{\alpha}_{i-\demi} F^{HLL}_{i-\demi})
\\
&
= \frac{1}{\Delta x}
  (\underline{\alpha}_{i+\demi}-\underline{\alpha}_{i-\demi}) F(U_i^n)
-\frac{1}{2\eps}
(\underline{\alpha}_{i+\demi}+\underline{\alpha}_{i-\demi})R(U_i^n),
\end{aligned}
\ee
where
\be\label{defalphanum}
\underline{\alpha}_{i+\demi}=\left(
I+\frac{\Delta x}{2\eps b}(I+\underline{\sigma}_{i+\demi})
\right)^{-1}.
\ee
For the sake of simplicity in the forthcoming asymptotic derivation,
we propose to introduce the $N\times N$ matrix
$\underline{\alpha}_{i+\demi}^\eps$ defined by
\be\label{defaleps}
\underline{\alpha}_{i+\demi}^\eps= \left(
\eps I+\frac{\Delta x}{2 b}(I+\underline{\sigma}_{i+\demi})
\right)^{-1},
\ee
so that we have
$\underline{\alpha}_{i+\demi}=\eps\underline{\alpha}_{i+\demi}^\eps$.
Recalling the definition, the scheme (\ref{schemeR}) takes the form:
\be\label{scheps}
\begin{aligned}
& \frac{\eps}{\Delta t}(U_i^{n+1}-U_i^n) +\frac{\eps}{\Delta x}
(\underline{\alpha}_{i+\demi}^\eps F^{HLL}_{i+\demi}
-\underline{\alpha}_{i-\demi}^\eps F^{HLL}_{i-\demi})
\\&
= \frac{\eps}{\Delta x}
  (\underline{\alpha}_{i+\demi}^\eps-\underline{\alpha}_{i-\demi}^\eps) F(U_i^n)
-\frac{1}{2}
(\underline{\alpha}_{i+\demi}^\eps+\underline{\alpha}_{i-\demi}^\eps)R(U_i^n).
\end{aligned}
\ee
We observe that $U_i^n$ remains close to the equilibrium state
$\Ecal(u_i^n)$ for $\eps$ small and we thus consider the following expansion:
$$
U_i^n = \Ecal(u_i^n) + \eps(U_1)_i^n + \Ocal(\eps^2),
$$
which we now plug into (\ref{scheps}). We easily have
$$
\begin{aligned}
& \frac{1}{\eps}R(U_i^n) = B(\Ecal(u_i^n)).(U_1)_i^n +
  \Ocal(\eps),\\
& F^{HLL}_{i+\demi} |_{\Ecal(u)+\Ocal(\eps)} =
   F^{HLL}_{i+\demi} |_{\Ecal(u)} + \Ocal(\eps),
\end{aligned}
$$
where 
$$
F^{HLL}_{i+\demi}|_{\Ecal(u)} =
{1 \over 2} \left( F(\Ecal(u_i^n)) + F(\Ecal(u_{i+1}^n)) \right)
-\frac{b}{2}\left( \Ecal(u_{i+1}^n) - \Ecal(u_i^n) \right).
$$
In addition we have
$$
\underline{\alpha}_{i+\demi}^\eps=
\frac{2b}{\Delta x} (I+\underline{\sigma}_{i+\demi})^{-1} + \Ocal(\eps).
$$
By considering the first-order terms issuing from (\ref{scheps}), we
obtain
$$
\begin{aligned}
& \frac{1}{\Delta t}(\Ecal(u_i^{n+1}) - \Ecal(u_i^n))
\\
& + \frac{2b}{\Delta x^2}\left(
(I+\underline{\sigma}_{i+\demi})^{-1} F^{HLL}_{i+\demi}|_{\Ecal(u)}
-
(I+\underline{\sigma}_{i-\demi})^{-1} F^{HLL}_{i-\demi}|_{\Ecal(u)}
\right)
\\
& = 
\frac{2b}{\Delta x^2} \left(
(I+\underline{\sigma}_{i+\demi})^{-1}
-
(I+\underline{\sigma}_{i-\demi})^{-1}
\right) F(\Ecal(u_i^n))
\\&
\quad - \frac{b}{\Delta x} \left(
(I+\underline{\sigma}_{i+\demi})^{-1}
+
(I+\underline{\sigma}_{i-\demi})^{-1}
\right)B(\Ecal(u_i^n)).(U_1)_i^n.
\end{aligned}
$$
Next,  assume the existence of a $n\times n$ squared matrix, denoted by
$\Mcal_{i+\demi}$, such that
\be\label{HM}
Q(I+\underline{\sigma}_{i+\demi})^{-1}=\frac{1}{b^2}\Mcal_{i+\demi} Q.
\ee
Recalling the assumptions (\ref{H1_1}), (\ref{H3_1}), and (\ref{H4_1}), we
 write
$$
\frac{1}{\Delta t}(u_i^{n+1}-u_i^n) =
-\frac{2}{b\Delta x^2}\left(
\Mcal_{i+\demi} Q F^{HLL}_{i+\demi}|_{\Ecal(u)} - 
\Mcal_{i-\demi} Q F^{HLL}_{i-\demi}|_{\Ecal(u)} \right),
$$
where 
$$
\begin{aligned}
Q F^{HLL}_{i+\demi}|_{\Ecal(u)}&= {1 \over 2} Q 
\left( F(\Ecal(u_i^n)) + F(\Ecal(u_{i+1}^n)) \right)
-\frac{b}{2} Q \left(\Ecal(u_{i+1}^n)-\Ecal(u_i^n) \right)
\\
&= -\frac{b}{2} (u_{i+1}^n -u_i^n).
\end{aligned}
$$
As a consequence, the asymptotic discrete regime is given by
\be\label{schasymp}
\frac{1}{\Delta t}(u_i^{n+1}-u_i^n) =
\frac{1}{\Delta x^2}
\left(
\Mcal_{i+\demi}(u_{i+1}^n -u_i^n) + 
\Mcal_{i-\demi}(u_{i-1}^n -u_i^n)
\right).
\ee
We thus have established the following result.

\begin{theorem}\label{theoasympnum}
Consider any $N\times N$ matrix $\underline{\sigma}_{i+\demi}$
such that the matrices $I+\underline{\sigma}_{i+\demi}$ and
$(1+\frac{\Delta x}{2\eps b})I+\underline{\sigma}_{i+\demi}$ are
nonsingular for all $\eps>0$. Assume the existence of a $n\times n$
matrix $\Mcal_{i+\demi}$ such that (\ref{HM}) holds, and introduce
the $n\times n$ matrix $\Mcal(u)$ defined by 
$$
\Mcal(u) = Q
A(\Ecal(u)) \Lcal^{-1}(u) D^2_U \Phi(\Ecal(u)) A(\Ecal(u))
D_u \Ecal(u).
$$
In addition, assume that the matrix $\Mcal_{i+\demi}$ is a discrete form
of $\Mcal(u)$ at each cell interface $x_{i+\demi}$. Then, the
asymptotic behavior of the scheme (\ref{scheps}) coincides with a
discrete form for the limit diffusion equation (\ref{eqlim2}).
\end{theorem}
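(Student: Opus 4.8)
The plan is to recognize that the discrete asymptotic regime \eqref{schasymp}, already obtained above by the formal $\eps\to0$ expansion of the scheme, is precisely a standard conservative discretization of the limit equation \eqref{eqlim2}, once \eqref{eqlim2} is rewritten in divergence form with diffusion matrix $\Mcal(u)$.

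First I would put \eqref{eqlim2} into the form $\dt u = \dx\big(\Mcal(u)\,\dx u\big)$. To this end I recall from the proof of Theorem~\ref{maintheo} that the diffusion flux is $S\Lcal^{-1}(u)S^T v$ with $v$ given by \eqref{defv}, and that the identity \eqref{eqSTv} reads $S^T v = D^2_U\Phi(\Ecal(u))\,\dx F(\Ecal(u))$. Using the chain rule $\dx F(\Ecal(u)) = A(\Ecal(u))\,D_u\Ecal(u)\,\dx u$ together with the definition \eqref{defS} of $S$, this flux becomes
$$
Q A(\Ecal(u))\,\Lcal^{-1}(u)\,D^2_U\Phi(\Ecal(u))\,A(\Ecal(u))\,D_u\Ecal(u)\,\dx u = \Mcal(u)\,\dx u,
$$
which is exactly the matrix $\Mcal(u)$ introduced in the statement. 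Hence \eqref{eqlim2} is equivalent to $\dt u = \dx\big(\Mcal(u)\,\dx u\big)$.

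Next I would identify \eqref{schasymp} as a consistent finite volume discretization of this nonlinear diffusion equation. Rewriting its right-hand side as $\tfrac{1}{\Delta x}\big(G_{i+\demi}-G_{i-\demi}\big)$ with the numerical diffusion flux $G_{i+\demi} := \Mcal_{i+\demi}\,\tfrac{u_{i+1}^n-u_i^n}{\Delta x}$, a Taylor expansion gives $\tfrac{u_{i+1}^n-u_i^n}{\Delta x} = \dx u + \Ocal(\Delta x)$; since by assumption $\Mcal_{i+\demi}$ is a discrete form of $\Mcal(u)$ at the interface $x_{i+\demi}$, the flux $G_{i+\demi}$ is consistent with $\Mcal(u)\,\dx u$. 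Therefore \eqref{schasymp} is a consistent conservative scheme for $\dt u = \dx\big(\Mcal(u)\,\dx u\big)$, that is, for \eqref{eqlim2}, which is the claimed statement.

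The only non-routine step is the algebraic reduction of the flux in \eqref{eqlim2} to $\Mcal(u)\,\dx u$; once that is done the consistency assertion is immediate. I would finally verify that the two nonsingularity hypotheses on $\underline{\sigma}_{i+\demi}$ (guaranteeing that the inverses in \eqref{defalphanum} and \eqref{defaleps} exist for all $\eps>0$) make legitimate both the passage to the limit $\eps\to0$ that produced \eqref{schasymp} and the matrix relation \eqref{HM} defining $\Mcal_{i+\demi}$, so that the above identification indeed holds at each cell interface.
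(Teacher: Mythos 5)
Your proposal is correct and follows essentially the same route as the paper's own (very terse) proof: both rewrite \eqref{eqlim2} in the form $\dt u = \dx\left(\Mcal(u)\,\dx u\right)$ and then observe that the discrete asymptotic regime \eqref{schasymp}, with $\Mcal_{i+\demi}$ a discrete form of $\Mcal(u)$, is precisely a consistent conservative discretization of that equation. The only difference is that you spell out the algebraic identification of the flux $S\Lcal^{-1}(u)S^T v$ with $\Mcal(u)\,\dx u$ via \eqref{eqSTv} and the chain rule, a step the paper asserts "directly" without detail.
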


\begin{proof}
We directly deduce from (\ref{eqlim2}) that the diffusive limit
equation reads 
$$
\dt u = \dx \left(\Mcal(u) \dx u \right).
$$
Since the asymptotic regime satisfied by the scheme (\ref{scheps}) is
governed by (\ref{schasymp}), the proposed choice of the matrix $\Mcal_{i+\demi}$ 
leads us to the correct behavior of the scheme as $\eps$ goes to zero.
\end{proof}


\subsection*{The nonlinear asymptotic regime}

We propose to extend the above numerical scheme to consider the
nonlinear asymptotic regime governed by the system (\ref{systnl}). To
address such an issue, once again we consider the scheme
(\ref{schemeR}) where $\Delta t$ is substituted by $\frac{\Delta
t}{\eps}$ and we set $\gamma=\frac{1}{\eps}$. To be
consistent, we substitute $R(U_i^n)$ by
$\frac{1}{\eps^{m-1}}R(U_i^n)$.

Adopting such a strategy, the same arguments used to obtain
(\ref{scheps}) now give
\be\label{schepsnl}
\begin{aligned}
& \frac{1}{\Delta t}(U_i^{n+1}-U_i^n) + \frac{1}{\Delta x}
(\underline{\alpha}_{i+\demi}^\eps F^{HLL}_{i+\demi}
-\underline{\alpha}_{i-\demi}^\eps F^{HLL}_{i-\demi})
\\
& = 
\frac{1}{\Delta x}
  (\underline{\alpha}_{i+\demi}^\eps-\underline{\alpha}_{i-\demi}^\eps) F(U_i^n)
-\frac{1}{2\eps^m}
(\underline{\alpha}_{i+\demi}^\eps+\underline{\alpha}_{i-\demi}^\eps)R(U_i^n).
\end{aligned}
\ee
where $\underline{\alpha}_{i+\demi}^\eps$ is defined by
(\ref{defaleps}). The above scheme exactly
coincides with (\ref{scheps}) as soon as we fix $m=1$.

Once again, as soon as $\eps$ tends to zero, the state vector
$U_i^n$ remains in a neighborhood of the equilibrium state
$\Ecal(u_i^n)$. As a consequence, we adopt a formal expansion given by
$$
U_i^n = \Ecal(u_i^n) +\eps (U_1)_i^n +\Ocal(\eps^2).
$$
Arguing the same calculations as used in the linear expansion case,
the asymptotic discrete equation is given by (\ref{schasymp}). Hence,
we have to choose $\Mcal_{i+\demi}$ in order to get a discretization
of (\ref{eqasympnl}).

\begin{theorem}
Consider an $N\times N$ matrix $\underline{\sigma}_{i+\demi}$
such that the matrices $I+\underline{\sigma}_{i+\demi}$ and
$(1+\frac{\Delta x}{2\eps b})I+\underline{\sigma}_{i+\demi}$ are
non-singular for all $\eps>0$. Assume the existence of a $n\times n$
matrix $\Mcal_{i+\demi}$ such that (\ref{HM}) holds.
Consider also the $n\times n$ matrix $\Mcal(u)$ defined by 
\be\label{Munl}
\Mcal(u) = Q A(\Ecal(u)) \Rcal^{-1}(u),
\ee
where $\Rcal^{-1}:\omega\to\Omega$ defines the unique solution of
(\ref{eqU1nl}). Assume that $\Mcal_{i+\demi}$ is a discrete form of
$\Mcal(u)$ at each cell interface $x_{i+\demi}$. The asymptotic
behavior of the scheme (\ref{schepsnl}) defines a discrete form of the
nonlinear diffusion equation (\ref{eqasympnl}).
\end{theorem}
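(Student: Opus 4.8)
The plan is to mirror the proof of Theorem~\ref{theoasympnum} and to reduce the nonlinear statement to the identification of the discrete asymptotic law (\ref{schasymp}) with the effective equation (\ref{eqasympnl}). First I would insert the near-equilibrium expansion $U_i^n = \Ecal(u_i^n) + \eps (U_1)_i^n + \Ocal(\eps^2)$ into the scheme (\ref{schepsnl}) and expand every ingredient in powers of $\eps$. The matrices $\underline{\sigma}$-dependent weights $\underline{\alpha}_{i\pm\demi}^\eps$ and the fluxes $F^{HLL}_{i\pm\demi}|_{\Ecal(u)}$ expand precisely as in the linear derivation that led to (\ref{scheps}); the only new feature is the relaxation term, which by the compatibility assumption (\ref{Hadd}) expands as in (\ref{dlRm}), namely
$$
\frac{1}{\eps^m}R(U_i^n) = R\big(\Ecal(u_i^n)+M(0)(U_1)_i^n\big) + \Ocal(\eps),
$$
with $(U_1)_i^n$ the discrete solution of the nonlinear corrector problem (\ref{eqU1nl}), whose solvability we take for granted as in Section~3.

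The decisive point, which I would isolate next, is that this change is invisible once one left-multiplies by $Q$: exactly as $QB(\Ecal(u))(U_1)=0$ terminated the linear computation, here $Q\,R\big(\Ecal(u_i^n)+M(0)(U_1)_i^n\big)=0$ by (\ref{H1_1}), so the entire source contribution drops out regardless of its nonlinear nature; the term carrying $F(\Ecal(u_i^n))$ likewise vanishes because $QF(\Ecal(u))=0$ by (\ref{H4_1}). Using the reduction (\ref{HM}) together with the identity $QF^{HLL}_{i+\demi}|_{\Ecal(u)}=-\tfrac{b}{2}(u_{i+1}^n-u_i^n)$, the leading-order balance therefore collapses to the very same discrete diffusion equation (\ref{schasymp}) obtained in the linear regime, now carrying the matrices $\Mcal_{i\pm\demi}$ prescribed through $\underline{\sigma}_{i\pm\demi}$ via (\ref{HM}).

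It then remains to choose $\Mcal_{i+\demi}$ so that (\ref{schasymp}) is consistent with (\ref{eqasympnl}). Writing (\ref{eqasympnl}) in flux form $\dt u = -\dx\big(QA(\Ecal(u))\bar{U}_1\big)=-\dx\Mcal(u)$ with $\Mcal(u)=QA(\Ecal(u))\Rcal^{-1}(u)$ as in (\ref{Munl}), I would recast (\ref{schasymp}) in conservative form with discrete flux $\tfrac{1}{\Delta x}\Mcal_{i+\demi}(u_{i+1}^n-u_i^n)$ and require this to reproduce the continuous flux $-\Mcal(u)$ at the interface $x_{i+\demi}$. Since the only constraints on $\underline{\sigma}_{i+\demi}$ are the stated nonsingularity conditions and the existence of $\Mcal_{i+\demi}$ satisfying (\ref{HM}), enough freedom remains to realize such a $\Mcal_{i+\demi}$ at every interface, which yields the claim.

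I expect the genuine difficulty to lie entirely in this last consistency step, not in the expansion. In contrast with the linear theorem, where $\Mcal(u)$ is a true diffusion matrix and (\ref{schasymp}) is transparently a three-point discretization of $\dx\big(\Mcal(u)\dx u\big)$, here $\Rcal^{-1}(u)$ and hence $\Mcal(u)$ depend nonlinearly on $\dx u$; consequently the assertion ``$\Mcal_{i+\demi}$ is a discrete form of $\Mcal(u)$'' must be read as the statement that the discrete flux $\tfrac{1}{\Delta x}\Mcal_{i+\demi}(u_{i+1}^n-u_i^n)$ matches the nonlinear flux built from the discrete gradient $(u_{i+1}^n-u_i^n)/\Delta x$. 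Verifying that such a matching is indeed consistent, and that the underlying nonlinear corrector problem (\ref{eqU1nl}) is well-posed, is the delicate point, which one checks directly on each model, as for the nonlinear Laplacian (\ref{SWplap}) arising from the shallow-water system.
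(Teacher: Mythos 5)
Your proposal is correct and follows essentially the same route as the paper: the $\eps$-expansion of (\ref{schepsnl}) mirrors the linear derivation (multiplication by $Q$, together with (\ref{HM}), (\ref{H1_1}) and (\ref{H4_1}), annihilates the nonlinear source and the $F(\Ecal(u))$ terms and reproduces (\ref{schasymp})), after which the proof reduces, exactly as in the paper, to rewriting (\ref{eqasympnl}) as $\dt u = \dx\left(\Mcal(u)\dx u\right)$ with $\Mcal(u)$ from (\ref{Munl}) and invoking the hypothesis that $\Mcal_{i+\demi}$ is a discrete form of $\Mcal(u)$. The interpretive caveat you flag --- that $\Rcal^{-1}(u)$, hence $\Mcal(u)$, depends nonlinearly on $\dx u$, so ``discrete form'' must be read as consistency of the discrete flux $\Mcal_{i+\demi}(u_{i+1}^n-u_i^n)/\Delta x$ with the nonlinear continuous flux --- is glossed over in the paper's three-line proof, but it is the same identification, so your account is if anything more complete.
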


\begin{proof}
By definition of $\Rcal^{-1}(u)$, we have $\bar{U}_1=\Rcal^{-1}(u)$
the unique solution to (\ref{eqU1nl}). As a consequence, we deduce the
following rewriting of (\ref{eqasympnl}):
$$
\dt u = \dx\left(\Mcal(u)\dx u\right),
$$
where $\Mcal(u)$ is given by (\ref{Munl}). The proposed definition of
$\Mcal_{i+\demi}$ concludes the proof.
\end{proof}


\section{Numerical experiments}

\subsection*{Euler equations with friction}

To illustrate the interest of the asymptotic preserving scheme (\ref{schR1}), we
apply it now to the Euler equations with a friction term
(\ref{Eulerfriction}). Here, we suggest to fix the matrix parameter 
$\underline{\sigma}_{i+\demi}$ as follows:
$$
\underline{\sigma}_{i+\demi}= \sigma_{i+\demi} I,
$$
where $\sigma_{i+\demi}$ stands for a scalar parameter to be
defined. As a consequence, the matrix $\underline{\alpha}_{i+\demi}$,
defined by (\ref{defalphanum}), is now given by
\be\label{defalphascal}
\underline{\alpha}_{i+\demi} = \alpha_{i+\demi} I, 
\qquad\quad
\alpha_{i+\demi}= \frac{1}{1+\frac{\Delta x}{2\eps b}
(1+\sigma_{i+\demi})}.
\ee
The scheme (\ref{schR1}) thus reads 
\begin{align}
\label{schEl1}
& \begin{aligned}
& \frac{\eps}{\Delta t}(\rho_i^{n+1} - \rho_i^n)
 + \frac{1}{\Delta x} \left(
\alpha_{i+\demi} F^{\rho,HLL}_{i+\demi} -
 \alpha_{i-\demi} F^{\rho,HLL}_{i-\demi}
\right)
\\
& = -\alpha_{i+\demi}
\frac{\sigma_{i+\demi}-\sigma_{i-\demi}}{2\eps b}
\alpha_{i-\demi} (\rho v)_i^n,
\end{aligned}\\
\label{schEl2}
& \begin{aligned}
& \frac{\eps}{\Delta t}((\rho v)_i^{n+1} - (\rho v)_i^n)
 + \frac{1}{\Delta x} \left(
\alpha_{i+\demi} F^{\rho v,HLL}_{i+\demi} -
  \alpha_{i-\demi} F^{\rho v,HLL}_{i-\demi}
\right)
\\
& = -\alpha_{i+\demi}
\frac{\sigma_{i+\demi}-\sigma_{i-\demi}}{2\eps b}
\alpha_{i-\demi} \left(\rho_i^n (v_i^n)^2 + p(\rho_i^n)\right)
\\
 & 
-\frac{1}{\eps} \frac{\alpha_{i+\demi}+\alpha_{i-\demi}}{2}
(\rho v)_i^n,
\end{aligned}
\end{align}
where the numerical flux vector $( F^{\rho,HLL}_{i+\demi}, F^{\rho v,HLL}_{i+\demi})$
is defined by (\ref{defFHLL}).

First, applying Theorem \ref{theoconsis}, we observe that 
the proposed
scheme is consistent with the system (\ref{Eulerfriction}) and 
preserves the admissible states. To address such an issue, in view of 
Theorem \ref{theoconsis}, we have to establish  the
positivity property ($i\in\ZZ$) 
$$
\begin{aligned}
& \rho_{i+\demi}^{\star L} = \alpha_{i+\demi}
\tilde{\rho}_{i+\demi}^\star
  +(1-\alpha_{i+\demi})\rho_i^n,\\
& \rho_{i+\demi}^{\star R} = \alpha_{i+\demi}
\tilde{\rho}_{i+\demi}^\star
  +(1-\alpha_{i+\demi})\rho_{i+1}^n.
\end{aligned}
$$
Since $\alpha_{i+\demi}$, defined by (\ref{defalphascal}), belongs to 
$(0,1)$, we have $\rho_{i+\demi}^{\star L}>0$ and
$\rho_{i+\demi}^{\star R}>0$ as soon as
$\tilde{\rho}_{i+\demi}^\star>0$ which is satisfied for sufficiently large 
values of $b$ (cf.~relation (\ref{defUtild})).

Now, we study for the asymptotic behavior of the scheme
(\ref{schEl1})-(\ref{schEl2}). Put in other words, we have to fix the
free parameter $\sigma_{i+\demi}$ to recover the expected asymptotic
regime in the limit of $\eps$ to zero. This required
asymptotic behavior must be governed by a discrete form of (\ref{diffEuler}). First, 
observe that 
$$
\lim_{\eps\to 0}
  \alpha_{i+\demi} =0, 
\qquad
\lim_{\eps\to 0}
  \frac{\alpha_{i+\demi}} 
{\varepsilon} =0
$$
From the momentum $(\rho v)_i^{n+1}$ governing equation
(\ref{schEl1}), we easily deduce the following momentum behavior in
the limit of $\eps$ to zero:
$$
(\rho v)_i^n =0,\qquad  i\in\ZZ.
$$
The density approximation thus admits the following asymptotic regime:
$$
\frac{1}{\Delta t}(\rho_i^{n+1}-\rho_i^n)
+\frac{2b}{\delta x^2}\left(
\frac{1}{1+\sigma_{i+\demi}} F^{\rho,HLL}_{i+\demi}|_{\rho v=0}
-
\frac{1}{1+\sigma_{i-\demi}} F^{\rho,HLL}_{i-\demi}|_{\rho v=0}
\right)=0.
$$
But we have
$$
 F^{\rho,HLL}_{i-\demi}|_{\rho v=0} =
-\frac{b}{2}\left( \rho_{i+1}^n - \rho_i^n \right),
$$
and therefore 
$$
\frac{1}{\Delta t}(\rho_i^{n+1}-\rho_i^n)
=\frac{b^2}{\delta x^2}\left(
\frac{1}{1+\sigma_{i+\demi}} (\rho_{i+1}^n - \rho_i^n)
+
\frac{1}{1+\sigma_{i-\demi}} (\rho_{i-1}^n - \rho_i^n)
\right).
$$
We choose
\be
\label{corrEl}
\sigma_{i+\demi} = \left\{
\begin{aligned}
& b^2\frac{\rho_{i+1}^n - \rho_i^n}{p(\rho_{i+1}^n) - p(\rho_i^n)}
   -1,\qquad\mbox{ if }\rho_{i+1}^n\not=\rho_i^n,\\
& \frac{b^2}{p'(\rho_i^n)}-1,
  \qquad\mbox{ otherwise},
\end{aligned}\right.
\ee
and arrive at the following discretization of the diffusion equation
(\ref{diffEuler}):
$$
\frac{1}{\Delta t}(\rho_i^{n+1}-\rho_i^n)
=\frac{1}{\delta x^2}\left(
p(\rho_{i+1}^n) - 2p(\rho_i^n) + p(\rho_{i-1}^n) 
\right).
$$
To conclude, observe that Theorem \ref{theoasympnum}
applies if the matrix $\Mcal_{i+\demi}$ is defined by
$\Mcal_{i+\demi}=m_{i+\demi} I_n$ where $I_n$ denotes the $n\times n$ identity
matrix and
$$
m_{i+\demi} = \left\{
\begin{aligned}
& \frac{p(\rho_{i+1}^n) - p(\rho_i^n)}{\rho_{i+1}^n - \rho_i^n},
  \qquad\mbox{ if }\rho_{i+1}^n\not=\rho_i^n,\\
& p'(\rho_i^n),
  \qquad\mbox{ otherwise}.
\end{aligned}\right.
$$

In \cite{bib_bt,bib_ccgrs}, the scheme (\ref{schEl1})-(\ref{schEl2}) was derived by a completely different approach.
Similarly, in \cite{bib_bt,bib_bcd,bib_buetcord}, the application of our general asymptotic preserving scheme (\ref{schR1})
is found in the framework of the radiative transfer (\ref{eqRT}).

\begin{figure}[htb]
  \centering
\scalebox{0.4}{\rotatebox{270}{\includegraphics{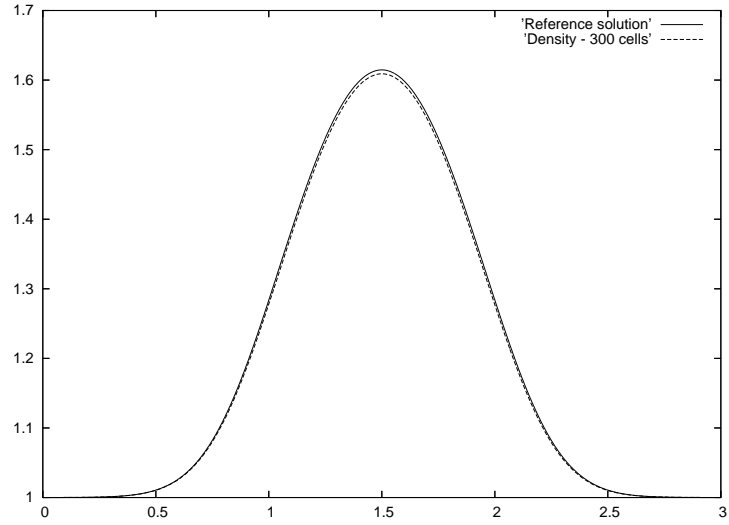}}}\\
\caption{\label{rho_comp}Reference (full line) and proposed scheme (dashed line) solution comparison.}
\end{figure}

\begin{figure}[htb]
  \centering
\scalebox{0.4}{\rotatebox{270}{\includegraphics{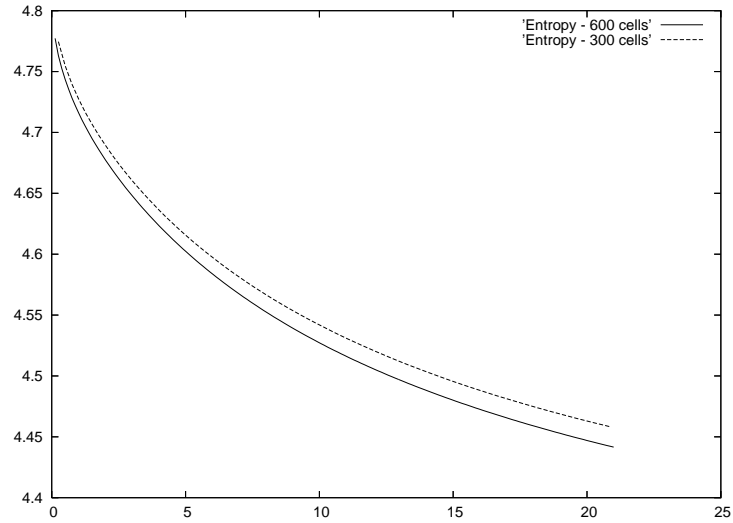}}}\\
\caption{\label{entropie}Decrease of the entropy.}
\end{figure}

As an illustration, the scheme (\ref{schEl1})-(\ref{schEl2}) supplemented by the asymptotic preserving correction 
(\ref{corrEl}) is used to approximate the solution when the initial data is given by
$$
(\rho,\rho v)^T=\begin{cases}(2,0)^T, \quad \text{$x\in[1.2,1.8]$},\\(1,0)^T, \quad \text{otherwise}.\end{cases}
$$
Furthermore, we choose the simple pressure law $p(\rho)=\rho^2$ and $\eps=10^{-3}$. In Figure~\ref{rho_comp}, the solution on a 300 cells grid ($\Delta x=10^{-2}$) is compared at time $t=2.10^4$ with a numerical approximation of (\ref{diffEuler}) computed 
with 
600 cells. We note a fairly good agreement between the two approximate results.
In agreement with
 Lemma~\ref{lentrop}, Figure~\ref{entropie} shows that entropy 
is decreasing in time. For two choices of space grids, 
we plot $\sum_i\limits \Bigl(\rho \frac{v^2}{2}+\rho e(\rho)\Bigr)_i$ versus time.


\subsection*{Coupled Euler/$M1$ equations}

We now propose to approximate the solution of the system (\ref{EulerM1}) by adopting the asymptotic-preserving scheme (\ref{schR1}) with the following matrix parameter $\underline{\sigma}_{i+\demi}$:
\begin{equation*}
\underline{\sigma}_{i+\demi}=\begin{pmatrix}\underline{\sigma}_{1,i+\demi}&0&-\underline{\sigma}_{2,i+\demi}&0\\0&0&0&0\\0&0&\underline{\sigma}_{3,i+\demi}&0\\0&0&0&0\end{pmatrix},
\end{equation*}
where $\underline{\sigma}_{j,i+\demi}$ are parameters to be defined later, 
in order
 to reach the required asymptotic regime (\ref{EM1asy}). With such a definition $\underline{\alpha}^\e_{i+\demi}$, defined by (\ref{defaleps}), becomes:
\begin{align*}
\underline{\alpha}^\e_{k}=\begin{pmatrix}\dfrac{2b_{k}\e}{\theta_{1,k}}&0&\dfrac{2b_{k}\e\gamma\Delta x\underline{\sigma}_{2,k}}{\theta_{1,k}\theta_{3,k}}&0\\0&\dfrac{2b_{k}\e}{2b_{k}\e+\gamma\Delta x}&0&0\\0&0&\dfrac{2b_{k}\e}{\theta_{3,k}}&0\\0&0&0&\dfrac{2b_{k}\e}{2b_{k}\e+\gamma\Delta x}\end{pmatrix},
\end{align*}
where $\theta_{j,k}=2b_{k}\e+\gamma\Delta x(1+\underline{\sigma}_{j,k})$ and $\gamma=\frac{\max(\kappa,\sigma)}{\e}$. The asymptotic regime associated with this scheme is 
\begin{align*}
\rho_i^{n+1}&
= \rho_i^n-\dfrac{\Delta t}{\Delta x}\dfrac{b^2}{\gamma\Delta x}\Bigg(
\dfrac{\rho_{i+1}^n-\rho_i^n}{1+\underline{\sigma}_{1,i+\demi}}-\dfrac{\rho_i^n-\rho_{i-1}^n}{1+\underline{\sigma}_{1,i-\demi}}\\
 &\quad \hskip3.cm 
 +\dfrac{\underline{\sigma}_{2,i+\demi}(e_{i+1}^n-e_i^n)}{(1+\underline{\sigma}_{1,i+\demi})(1+\underline{\sigma}_{3,i+\demi})}-\dfrac{\underline{\sigma}_{2,i-\demi}(e_{i}^n-e_{i-1}^n)}{(1+\underline{\sigma}_{1,i-\demi})(1+\underline{\sigma}_{3,i-\demi})}\Bigg),
 \\
e_i^{n+1}&=e_i^n-\dfrac{\Delta t}{\Delta x}\dfrac{b^2}{\gamma\Delta x}\Bigg(
\dfrac{e_{i+1}^n-e_i^n}{1+\underline{\sigma}_{3,i+\demi}}-\dfrac{e_{i}^n-e_{i-1}^n}{1+\underline{\sigma}_{3,i-\demi}}\Bigg).
\end{align*}
To recover a discrete form of (\ref{EM1asy}), we propose to set 
\begin{align*}
\underline{\sigma}_{1,i+\demi}&=\dfrac{\kappa}{\gamma}\dfrac{\rho_{i+1}^n-\rho_i^n}{p_{i+1}^n-p_i^n}-1,\\
\underline{\sigma}_{2,i+\demi}&=\dfrac{\sigma}{\gamma}\dfrac{\rho_{i+1}^n-\rho_i^n}{p_{i+1}^n-p_i^n},\\
\underline{\sigma}_{3,i+\demi}&=\dfrac{3\sigma}{\gamma}-1.
\end{align*}
In conclusion, setting $\Delta p_{i+\demi}^n = \dfrac{p_{i+1}^n-p_i^n}{\rho_{i+1}^n-\rho_i^n}$ we obtain
\begin{equation*}
\underline{\sigma}_{i+\demi}=\begin{pmatrix}{\kappa \over \gamma \Delta p_{i+\demi}^n}-1&0&-{\sigma \over \gamma \Delta p_{i+\demi}^n}&0
\\0&0&0&0\\0&0&\dfrac{3\sigma}{\gamma}-1&0\\0&0&0&0\end{pmatrix},
\end{equation*}

This scheme is now applied to the following numerical experiment. We choose an initial data given by
\begin{equation*}
\rho=0.2, \quad
v=0, 
\quad
f=0,
\qquad
e=\begin{cases}1.5, \quad x\in[0.45,0.55],
\\1, \quad \text{otherwise.}\end{cases}
\end{equation*}
The parameters of the model are $\kappa=2,~\sigma=1,~\e=10^{-3},~C_p=10^{-3}$ and $\eta=2$. 
The numerical solution is computed with $\Delta x=10^{-2}$ and compared to Figure~\ref{EM1} with a reference solution obtained by solving (\ref{EM1asy}).
\begin{figure}[htb]
  \centering 
Density\\
\scalebox{0.4}{\rotatebox{270}{\includegraphics{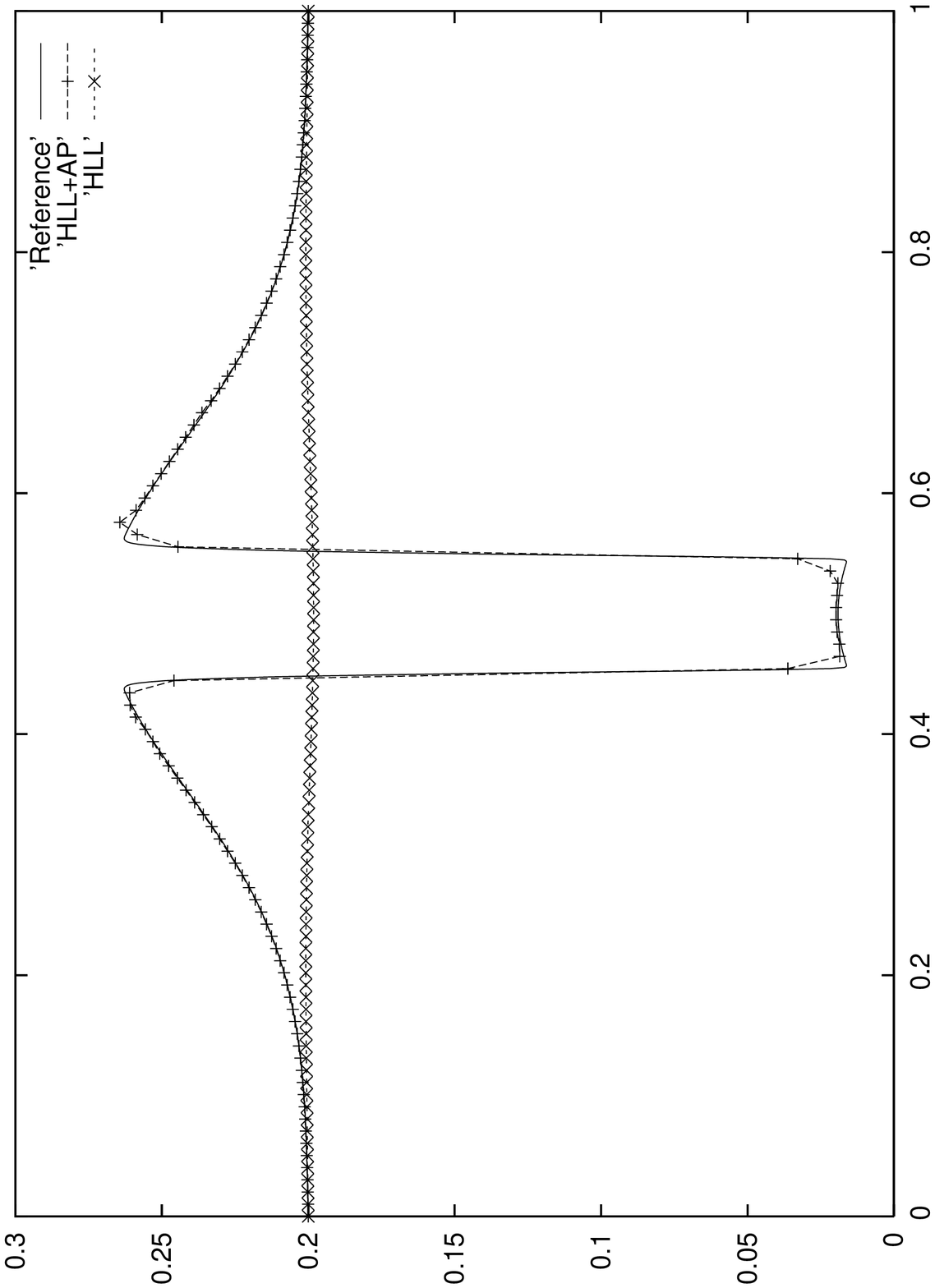}}}\\
Radiative energy
\scalebox{0.4}{\rotatebox{270}{\includegraphics{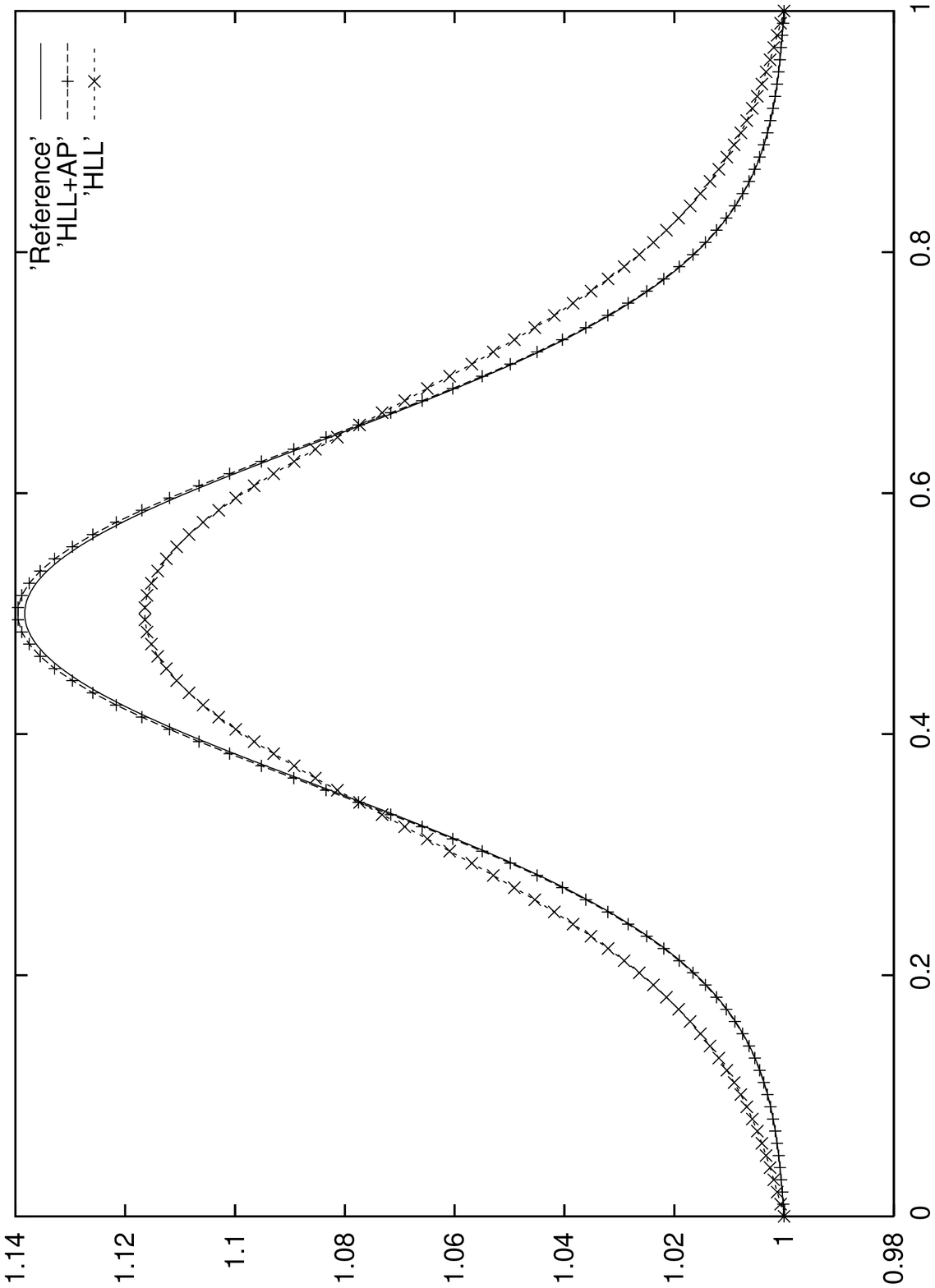}}}\\
\caption{\label{EM1}Reference (full line) and proposed scheme (dashed lines) solution with and without 
asymptotic preserving correction. Density $\rho$ is shown above and Radiative energy $e$ is shown below.}
\end{figure}
Once again, the solution is perfectly captured even on a coarse grid. 
We emphasize that this case is very challenging because of the very specific form of diffusion involved on the density. 


\section{Concluding remarks} 

We have presented a general framerwork to investigate the late-time/stiff-rela\-xation limit of 
a large class of hyperbolic systems. This framework was shown to cover the examples of interest arising in the modeling of 
complex fluid flows when several time-scales are involved. A new class of schemes was proposed for their 
approximation, and we demonstrated that the proposed modification  
was crucial in order to ensure the correct asymptotic behavior for late times.
It would be interesting to make comparisons between the numerical results obtained here and 
concrete experimental results, especially on radiative transfer problems. In future work, we plan to generalize our continuous and discrete 
frameworks to problems with several space dimensions, and 
to numerically investigate the robustness and accuracy of
such multi-dimensional finite volume discretizations in realistic physical situations. 


\section*{Acknowledgments}

The authors gratefully thank Eric Paturel and Friedrich Wagemann for fruitful discussions.
The second author (PLF) was partially supported by  
the Agence Nationale de la Recherche (ANR) through the grant 06-2-134423, and 
by the Centre National de la Recherche Scientifique (CNRS).


\end{document}